\let\ltxcup\cup
\let\ltxcap\cap
\let\ltxbigcup\bigcup
\let\ltxbigcap\bigcap
\numberwithin{equation}{section}
\renewcommand{\leq}{\leqslant}
\renewcommand{\geq}{\geqslant}
\renewcommand{\cup}{\ltxcup}
\renewcommand{\cap}{\ltxcap}
\renewcommand{\bigcup}{\ltxbigcup}
\renewcommand{\bigcap}{\ltxbigcap}
\renewcommand{\tilde}{\widetilde}
\renewcommand{\epsilon}{\varepsilon}
\newcommand{\area}{s}
\newcommand{\A}{\mathscr{A}}
\newcommand{\B}{\mathscr{B}}
\newcommand{\der}{\mathrm{d}}
\newcommand{\E}{\mathbb{E}}
\newcommand{\F}{\mathcal{E}}
\newcommand{\N}{\mathbb{N}}
\newcommand{\Z}{\mathbb{Z}}
\newcommand{\R}{\mathbb{R}}
\newcommand{\cara}[1]{\mathbbm{1}_{#1}}
\newcommand{\vol}[1]{|#1|}
\newcommand{\card}{\mbox{card}}
\newcommand{\diam}{\mbox{diam}}
\newcommand{\dist}{\mbox{dist}}
\newcommand{\tr}{\mbox{Tr}}
\newcommand{\Sp}{\mbox{Sp}}
\newcommand{\adj}{\mbox{adj}}
\newcommand{\var}{\mbox{Var}}
\newcommand{\cov}{\mbox{Cov}}
\newcommand{\cvlaw}{\overset{\mathcal{L}}{\longrightarrow}}
\newcommand{\defeq}{:=}
\newcommand{\sizeR}{\right}
\newcommand{\sizeL}{\left}
\newcommand{\lun}[1]{|#1|_1}
\theoremstyle{plain}
\newtheorem{prop}{Proposition}[section]
\newtheorem{lem}[prop]{Lemma}
\newtheorem{defin}[prop]{Definition}
\newtheorem{defin/prop}[prop]{Definition/Proposition}
\newtheorem{theo}[prop]{Theorem}
\newtheorem{corr}[prop]{Corollary}
\begin{document}

\begin{frontmatter}
\title{Mixing properties and central limit theorem for associated point processes}
\runtitle{Mixing properties and central limit theorem for associated point processes}

\begin{aug}
  \author{\fnms{Arnaud}  \snm{Poinas}\thanksref{a,e1}\ead[label=e1,mark]{arnaud.poinas@univ-rennes1.fr}}
  \author{\fnms{Bernard} \snm{Delyon}\thanksref{a,e3}\ead[label=e3,mark]{bernard.delyon@univ-rennes1.fr}}
  \author{\fnms{Fr\'ed\'eric} \snm{Lavancier}\thanksref{b,e2}\ead[label=e2,mark]{Frederic.Lavancier@univ-nantes.fr}}
  \runauthor{A. Poinas and B. Delyon and F. Lavancier}
  \affiliation{Universit\'e de Rennes}
	\address[a]{IRMAR -- Campus de Beaulieu - Bat. 22/23 -- 263 avenue du G\'en\'eral Leclerc --  35042 Rennes -- France}
	\printead{e1}\\
	\printead{e3}
  \address[b]{Laboratoire de Math\'ematiques Jean Leray -- BP 92208 -- 2, Rue de la Houssini\`ere -- F-44322 Nantes Cedex 03 -- France. Inria, Centre Rennes  Bretagne Atlantique, France.}
\printead{e2}
  \end{aug}

\begin{abstract} 
Positively (resp. negatively) associated point processes are a class of point processes that induce attraction (resp. inhibition) between the points. As an important example, determinantal point processes (DPPs) are negatively associated. We prove $\alpha$-mixing properties for associated spatial point processes by controlling their $\alpha$-coefficients in terms of the first two intensity functions. A central limit theorem for functionals of associated point processes is deduced, using both the association and the $\alpha$-mixing properties. We discuss in detail the case of DPPs, for which we obtain the limiting distribution of sums, over subsets of close enough points of the process, of any bounded function of the DPP. As an application, we get the asymptotic properties of the parametric two-step estimator of some inhomogeneous DPPs. 
\end{abstract}

\begin{keyword}
\kwd{determinantal point process}
\kwd{parametric estimation}
\kwd{strong mixing}
\kwd{negative association}
\kwd{positive association}
\end{keyword}

\end{frontmatter}
\section{Introduction}

Positive association (PA)  and negative association (NA) \cite{NA_Int,PA_Int} are properties that quantify the  dependence between random variables.  They have found many applications  in  limit theorems for random fields \cite{BulinskiRate,CLTRFNA}. 
Even if the extension of PA to point processes have been used in analysis of functionals of random measures \cite{AppNa1,AppNa2}, there are no general applications of PA or NA to limit theorems for point processes. We contribute in this paper to this aspect for spatial point processes on $\R^d$.
We especially discuss in detail the  case of determinantal point processes (DPPs for short), that are an important example of negatively associated point processes. DPPs are a type of repulsive point processes that were first introduced by Macchi~\cite{Macchi} in 1975 to model systems of fermions in the context of quantum mechanics. They have been extensively studied in Probability theory with applications ranging from random matrix theory to non-intersecting random walks, random spanning trees and more (see~\cite{Hough}). From a statistical perspective, DPPs have applications in machine learning~\cite{MachLearn}, telecommunication~\cite{Telecom1,Telecom2,gomez_case_2015}, biology, forestry~\cite{Lavancier} and computational statistics~\cite{BaHa16Sub}.

As a first result, we relate the association property of a point process to its $\alpha$-mixing properties.
First introduced in \cite{AlphaMix}, $\alpha$-mixing is a measure of dependence between random variables, which is actually  more popular than PA or NA. It has been used extensively to prove central limit theorems for dependent random variables~\cite{Bolt, Doukhan, Guyon, Ibragimov, AlphaMix}. More details about mixing can be found in~\cite{Survey, Doukhan}.
We derive in Section~\ref{sec:NA} an important covariance inequality for associated point processes (Theorem~\ref{NAmain}), that turns out to be very similar to inequalities established in \cite{WeakDep} for weakly dependent continuous random processes. We show that this inequality implies $\alpha$-mixing and precisely allows to control the $\alpha$-mixing coefficients by the first two intensity functions of the point process. This result for point processes is in contrast with the case of random fields where it is known that association does not imply $\alpha$-mixing in general (see Examples~5.10-5.11 in~\cite{BulinskiRate}). However, this implication holds true for integer-valued random fields (see \cite{DiscreteDep} or \cite{BulinskiRate}). As explained in \cite{DiscreteDep}, this is because the $\sigma$-algebras generated by countable sets are much poorer than $\sigma$-algebras generated by continuous sets. In fact, by this aspect and some others (for instance our proofs boil down to the control of the number of points in bounded sets), point processes are very similar to discrete processes.

We then establish in Section~\ref{sec:CLT} a general central limit theorem (CLT) for random fields defined as a function of an associated point process (Theorem~\ref{CLT}). A standard method for proving  this kind of theorem is to rely on sufficiently fast decaying $\alpha$-mixing coefficients along with some moment assumptions. We use an alternative procedure that exploits both the mixing properties and the association property. This results in weaker assumptions on the underlying point process, that can have slower decaying mixing coefficients.  This improvement allows in particular to include all standard DPPs, some of them being otherwise excluded with the first approach (like for instance DPPs associated to the Bessel-type kernels~\cite{Biscio}).

Section~\ref{sec:DPP} discusses in detail the case of DPPs, where we derive a tight explicit bound for their $\alpha$-mixing coefficients and prove a central limit theorem for certain functionals of a DPP (Theorem~\ref{CLTDPP}). Specifically, these functionals write as a sum of a bounded function of the DPP, over subsets of close-enough points of the DPP. A particular case concerns sums over $p$-tuple of close enough points of the DPP, which are frequently encountered in asymptotic inference. Limit theorems in this setting have been established in \cite{Sosh} when $p=1$, and in \cite{BiscioMix} for stationary DPPs and $p\geq 1$.  We thus extend these studies to sums over any subsets and without the stationary assumption. As a statistical application, we consider the parametric estimation of second-order intensity reweighted stationary DPPs. These DPPs have an inhomogeneous first order intensity, but translation-invariant higher order (reweighted) intensities. We prove that the two-step estimator introduced in~\cite{waag}, designed for this kind of inhomogeneous point process models, is consistent and asymptotically normal when applied to  DPPs. 

\section{Associated point processes and \texorpdfstring{$\alpha$}{alpha}-mixing} \label{sec:NA}
\subsection{Notation}
In this paper, we consider locally finite simple point processes on $\R^d$, for a fixed $d\in\N$.
Some theoretical background on point processes can be found in~\cite{DV,moeller:waagepetersen:04}. 
 We denote by $\Omega$ the set of locally finite point configurations in $\R^d$. For $X\in\Omega$ and $A\subset\R^d$, we write
 $$N(A)\defeq\card(X\cap A)$$
 for the random variable representing the number of points of $X$ that fall in $A$. We also denote by $\B(A)$ the Borel $\sigma$-algebra of $A$ and by $\F(A)$ the $\sigma$-algebra generated by $X\cap A$, defined by
$$\F(A)\defeq\sigma(\{X\in\Omega: N(B)=m\},B\in\B(A),m\in\N).$$ 
 
 The notation $|.|$ will have a different meaning depending on the object it is applied. For $x\in\R^d$,  $|x|$ stands for the euclidean norm. For a set $A\subset\R^d$, $|A|:=\int_A \der x$ is the euclidean volume of $A$, and for a set $I\subset\Z^d$ we write $|I|$ for the cardinal of $I$. For $A,B$ two subsets of $\R^d$ (resp. $\Z^d$) we define $\dist(A,B)$ as $\inf_{x\in A,y\in B}|y-x|$ and $\diam(A)$ as $\sup_{x,y\in A}|y-x|$ where $|.|$ is the associated norm on $\R^d$ (resp. $\Z^d$). For $i\in\Z^d$, $\lun{i}$ denotes the $\ell_1$-norm. Finally, we write $\mathcal{B}(x,r)$ for the  euclidean  ball centred at $x$ with radius $r$ and $\|.\|_p$ for the $p$-norm of random variables and functions where $1 \leq p\leq \infty$.

We recall that the intensity functions of a point process (when they exist), with respect to the Lebesgue measure,  are defined as follows.
\begin{defin}
Let $X\in\Omega$ and $n\geq 1$ be an integer. If there exists a non-negative function $\rho_n:(\R^d)^n\rightarrow\R$ such that
$$\E\left [ \sum^{\neq}_{x_1,\cdots,x_n\in X} f(x_1,\cdots,x_n) \right ]=\int_{(\R^d)^n}f(x_1,\cdots,x_n)\rho_n(x_1,\cdots,x_n)\der x_1\cdots\der x_n$$
for all locally integrable functions $f:(\R^d)^n\rightarrow\R$ then $\rho_n$ is called the $n$th order intensity function of $X$.
\end{defin}
In particular, $\rho_n(x_1,\cdots,x_n)\der x_1\cdots\der x_n$ can be viewed as the probability that $X$ has a point in each of the infinitesimally small sets around $x_1,\cdots,x_n$ with volumes $\der x_1,\cdots,\der x_n$ respectively. 

We further introduce the notation 
\begin{equation}\label{defD}
D(x,y)\defeq \rho_2(x,y)-\rho_1(x)\rho_1(y).
\end{equation}
This quantity is involved in the following equality, deduced from the previous definition and used several times throughout the paper:
\begin{equation}\label{eq:cov}
\cov(N(A),N(B))=\int_{A\times B}D(x,y)\der x\der y.
\end{equation}

\subsection{Negative and positive association}

Our goal in this section is to prove a crucial covariance inequality and to deduce an $\alpha$-mixing property for associated point processes. We recall that associated point processes are defined the following way (see Definitions 2.11-2.12 in \cite{DefAssoc} for example).

\begin{defin}
A point process $X$ is said to be negatively associated (NA for short) if, for all families of pairwise disjoint Borel sets $(A_i)_{1\leq i\leq k}$ and $(B_i)_{1\leq i\leq l}$ such that
\begin{equation} \label{condNA} 
(\cup_i A_i) \cap (\cup_j B_j) = \emptyset
\end{equation}
and for all coordinate-wise increasing functions $F:\N^k\mapsto\R$ and $G:\N^l\mapsto\R$ it satisfies
\begin{multline} \label{NAvec}
\E[F(N(A_1),\cdots,N(A_k))G(N(B_1),\cdots,N(B_l))] \\
\leq\E[F(N(A_1),\cdots,N(A_k))]\E[G(N(B_1),\cdots,N(B_l))].
\end{multline}
Similarly, a point process is said to be positively associated (PA for short) if it satisfies the reverse inequality for all families of pairwise disjoint Borel sets $(A_i)_{1\leq i\leq k}$ and $(B_i)_{1\leq i\leq l}$ (but not necessarily satisfying~(\ref{condNA})).\\
If a point process is NA or PA it is said to be associated.
\end{defin} 

The main difference between the definition of PA and NA is the restriction~(\ref{condNA}) that only affects NA point processes. Notice that without~(\ref{condNA}), $\E[N(A)]^2\leq\E[N(A)^2]$ contradicts~(\ref{NAvec}) hence the need to consider functions depending on disjoint sets for NA point processes.

These inequalities extend to the more general case of functionals of point processes. The first thing we need is a more general notion of increasing functions. We associate to $\Omega$ the partial order $X\leq Y$ iff $X\subset Y$ and then call a function on $\Omega$ increasing if it is increasing respective to this partial order. 
The association property can then be extended to these functions. A proof in a general setting can be found in~\cite[Lemma 3.6]{Lyons}
but we give an alternative elementary one in Appendix~\ref{proofAss}.

\begin{theo} \label{trueNA}
Let $X$ be a NA point process on $\R^d$ and $A,B$ be disjoint subsets of $\R^d$. Let $F:\Omega\mapsto\R$ and $G:\Omega\mapsto\R$ be bounded increasing functions, then
\begin{equation} \label{CNA}
\E[F(X\cap A)G(X\cap B)]\leq\E[F(X\cap A)]\E[G(X\cap B)].
\end{equation}
If $X$ is PA then, for all $A,B\subset \R^d$ not necessarily disjoint,
\begin{equation} \label{CPA}
\E[F(X\cap A)G(X\cap B)]\geq\E[F(X\cap A)]\E[G(X\cap B)].
\end{equation}
\end{theo}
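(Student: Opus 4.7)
The plan is to recover (\ref{CNA}) and (\ref{CPA}) from the definition of association by a discretization argument. I detail the NA case; the PA version is identical once the restriction (\ref{condNA}) is dropped throughout. First I would reduce to the case where $A$ and $B$ are bounded, deducing the general case by intersecting with $\mathcal{B}(0,R)$ and sending $R \to \infty$, which works provided one has suitable continuity of $F, G$ under the inclusion order (this is recovered in the final approximation step for general bounded increasing $F, G$).

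For each $n$, fix a partition $\pi_n$ of $A$ into Borel cells of diameter at most $1/n$, and similarly $\pi_n'$ for $B$; in each cell $C$ fix an enumeration $(x_C^{(j)})_{j \geq 1}$ of distinct points of $C$. Define the canonical configuration
\[
\psi_n(X \cap A) \defeq \bigcup_{C \in \pi_n} \bigl\{ x_C^{(1)}, \ldots, x_C^{(N(C))} \bigr\}
\]
and analogously $\psi_n(X \cap B)$, together with the approximants $F_n(X) \defeq F(\psi_n(X \cap A))$ and $G_n(X) \defeq G(\psi_n(X \cap B))$. By construction $F_n$ depends only on the cell counts $(N(C))_{C \in \pi_n}$, and because increasing any such count by one appends exactly one point to $\psi_n(X \cap A)$, the monotonicity of $F$ makes $F_n$ coordinate-wise non-decreasing in these counts; the same holds for $G_n$. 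Since the cells of $\pi_n \cup \pi_n'$ are pairwise disjoint subsets of the disjoint sets $A$ and $B$, applying the definition of NA to $(F_n, G_n)$ yields $\E[F_n(X)\, G_n(X)] \leq \E[F_n(X)]\, \E[G_n(X)]$ for every $n$.

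The main obstacle is the passage to the limit $n \to \infty$. Because $X$ is simple and $A \cup B$ bounded, almost surely for $n$ large enough each cell contains at most one point of $X$, and on this event $\psi_n(X \cap A)$ has the same cardinality as $X \cap A$ with each point displaced by at most $1/n$; in particular $\psi_n(X \cap A) \to X \cap A$ vaguely. If $F$ were vaguely continuous, $F_n(X) \to F(X \cap A)$ a.s.\ and dominated convergence on both sides of the NA inequality would close the argument. For a general bounded increasing measurable $F$ no continuity is a priori available, so I would reduce to the continuous case by a density argument: the layer-cake identity $F = \inf F + \int_0^{\|F-\inf F\|_\infty}\cara{\{F > t\}}\, \der t$ reduces matters to indicators of increasing Borel events, and any such indicator can be approximated by a monotone sequence of vaguely-continuous increasing functionals (built, for instance, by mollification respecting the inclusion order, or more elementarily by writing an increasing Borel event as a monotone limit of cylinder events $\{N(D_1) \geq m_1, \ldots, N(D_k) \geq m_k\}$, for which the NA inequality already holds by hypothesis); passing to the limit in $L^1$ concludes.
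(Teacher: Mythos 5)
Your discretization step is sound and is essentially the same starting point as the paper's: the base class of coordinate-wise increasing functions of cell counts $(N(C))_{C\in\pi_n}$ is exactly the class of cylinder functionals for which the definition of NA applies directly, and your observation that incrementing a count appends one point to $\psi_n(X\cap A)$, so that $F\circ\psi_n$ is coordinate-wise increasing, is correct. The problem is that everything after that --- the passage from cylinder (or vaguely continuous) functionals to arbitrary bounded increasing \emph{measurable} $F,G$ --- is the actual content of the theorem, and your proposal leaves it as an assertion. The two mechanisms you offer are not established: ``mollification respecting the inclusion order'' is not defined, and the claim that every increasing event in $\F(A)$ is a monotone limit of increasing cylinder events $\{N(D_1)\geq m_1,\dots,N(D_k)\geq m_k\}$ is precisely the nontrivial point. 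The class of increasing events is not a $\sigma$-algebra (it is not closed under complementation), so the ordinary monotone class theorem tells you only that cylinder events generate $\F(A)$, not that increasing events are reachable from \emph{increasing} cylinder events by monotone limits. Without that, the layer-cake reduction to indicators does not close the argument. (The preliminary reduction to bounded $A,B$ has the same flavour of gap: for increasing measurable $F$, $F(X\cap A\cap\mathcal{B}(0,R))$ converges as $R\to\infty$ by monotonicity, but nothing guarantees the limit is $F(X\cap A)$.)

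The paper's proof is devoted entirely to this extension and resolves it with a device your proposal lacks: for an arbitrary bounded measurable $f$ it forms the increasing envelope $\hat f(X)=\sup_{Y\leq X}f(Y)$ and defines $\mathcal{S}$ as the set of all bounded $f$ (increasing or not) for which the covariance inequality holds with $\hat f$ in place of $f$. This converts a statement about the non-linear, non-$\sigma$-algebra class of increasing functions into a statement about \emph{all} bounded measurable functions, to which the functional monotone class theorem applies once one checks that $\mathcal{S}$ is stable under bounded monotone and uniform convergence (using $h\leq k\Rightarrow\hat h\leq\hat k$ and $\|\hat f-\hat g\|_\infty\leq\|f-g\|_\infty$) and that the cylinder functionals form an algebra inside $\mathcal{S}$. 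Since $\hat f=f$ when $f$ is increasing, the theorem follows. If you want to salvage your route, you would need to supply an argument of comparable strength at the approximation step; as written, the proposal assumes the hard part.
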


Association is a very strong dependence condition. As proved in the following theorem, it implies a strong covariance inequality that is only controlled by the behaviour of the first two intensity functions of $X$ (assuming their existence). To state this result, we need to introduce the following seminorm for functionals over point processes.

\begin{defin}
For any $A\subset\R^d$,  $\|.\|_A$ is the seminorm on the functions $f:\Omega\mapsto\mathbb{C}$ defined by
$$\|f\|_A\defeq\sup_{\substack{X\in\Omega, X\subset A \\ x\in A}}|f(X)-f(X\cup\{x\})|.$$
\end{defin}

Note that $\|.\|_A$ is a Lipschitz norm in the sense that it controls the way $f(X)$ changes when a point is added to $X\cap A$.

\begin{theo} \label{NAmain}
Let $X$ be an associated point process and $A,B\subset\R^d$ be two disjoint bounded subsets. Let $f:\Omega\rightarrow\R$ and $g:\Omega\rightarrow\R$ be two functions such that $f(X\cap A)$ and $g(X\cap B)$ are bounded, then
\begin{equation} \label{NAbound}
|\cov(f(X\cap A),g(X\cap B))|\leq \|f\|_A\|g\|_B|\cov(N(A),N(B))|.
\end{equation}
Moreover, if $X$ is PA then it also satisfies the same inequality for all $A,B\subset\R^d$ not necessarily disjoint.
\end{theo}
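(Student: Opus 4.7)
The plan is to exploit Theorem~\ref{trueNA} by writing $f$ and $g$ as differences of bounded increasing functionals, obtained by ``straightening'' them with the counting variables. Set $c := \|f\|_A$ and $d := \|g\|_B$, both finite since $f(X\cap A)$ and $g(X\cap B)$ are bounded. Define
\[u(X) := cN(X\cap A) + f(X\cap A), \qquad v(X) := cN(X\cap A) - f(X\cap A),\]
and analogously $U(X), V(X)$ from $g$, $B$, $d$. Adding a point $x\in A$ to $X$ changes $cN(X\cap A)$ by exactly $c$, and by the very definition of $\|\cdot\|_A$ changes $f(X\cap A)$ by at most $c$ in absolute value; adding a point outside $A$ leaves both $N(X\cap A)$ and $f(X\cap A)$ unchanged. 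Hence $u$ and $v$ are increasing functionals on $\Omega$ depending only on $X\cap A$, and likewise $U, V$ depend only on $X\cap B$.

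Since $N(A)$ and $N(B)$ are typically unbounded, Theorem~\ref{trueNA} cannot be applied to $u, v, U, V$ directly. The remedy is a standard truncation: for each $M>0$ the functions $u\wedge M$ and $v\wedge M$ are bounded and still increasing, so under NA and $A\cap B=\emptyset$ one gets $\cov(u\wedge M, U\wedge M)\leq 0$, and similarly for the three other pairs. The right-hand side of \eqref{NAbound} is infinite unless $N(A), N(B)\in L^2$, so one may assume this integrability; then $u,v,U,V\in L^2$, dominated convergence gives $u\wedge M\to u$ and $U\wedge M\to U$ in $L^2$, and the four NA inequalities pass to the limit: writing $\alpha:=\cov(u,U)$, $\beta:=\cov(u,V)$, $\gamma:=\cov(v,U)$, $\delta:=\cov(v,V)$, all four quantities are $\leq 0$.

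The rest is algebra. Since $u-v=2f(X\cap A)$, $u+v=2cN(A)$, and analogously for $U,V$, bilinearity of covariance yields
\[\cov\bigl(f(X\cap A),g(X\cap B)\bigr)=\tfrac{1}{4}(\alpha-\beta-\gamma+\delta), \qquad \alpha+\beta+\gamma+\delta=4cd\,\cov(N(A),N(B)).\]
As all four terms are $\leq 0$, the second identity gives $|\alpha|+|\beta|+|\gamma|+|\delta|=4cd\,|\cov(N(A),N(B))|$, and the bounds $-\tfrac{1}{4}(|\alpha|+|\delta|)\leq \cov(f,g)\leq\tfrac{1}{4}(|\beta|+|\gamma|)$ are both contained in $\pm\, cd\,|\cov(N(A),N(B))|$, which is \eqref{NAbound}. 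The PA case is handled identically, dropping the disjointness assumption: all four covariances are then nonnegative with sum $4cd\,\cov(N(A),N(B))\geq 0$, and the same bookkeeping applies. The only delicate step in this plan is the truncation/$L^2$-approximation argument to bypass the unboundedness of $N(A), N(B)$; once it is in place, the sign bookkeeping is routine.
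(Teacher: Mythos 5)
Your proof is correct and follows essentially the same route as the paper's: your $u,v,U,V$ are exactly the paper's $f_\pm,g_\pm$ up to sign (the paper sets $f_\pm=f\pm\|f\|_A N(A)$), you use the same truncation $u\wedge M$ to make Theorem~\ref{trueNA} applicable, and the sign bookkeeping differs only cosmetically (you combine all four covariances at once, the paper uses two of them and then replaces $f$ by $-f$ for the lower bound). No substantive difference.
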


\begin{proof} The proof mimics the one from~\cite{Bulinski} for associated random fields. We only consider the case of NA point processes but the PA case can be treated in the same way.\\
Consider the functions $f_+,f_-:\Omega\rightarrow\R$, $\F(A)$-measurable, and $g_+,g_-:\Omega\rightarrow\R$, $\F(B)$-measurable, defined by
$$\left \{ \begin{tabular}{l} $f_{\pm}(X)=f(X\cap A)\pm \|f\|_A N(A),$ \\  $g_{\pm}(X)=g(X\cap B)\pm \|g\|_B N(B).$ \end{tabular} \right .$$
For all $x\in A\backslash X$, $f_+(X\cup\{x\})-f_+(X)=f(X\cup\{x\}\cap A)-f(X\cap A)+\|f\|_A$ which is positive by definition of $\|f\|_A$. $f_+$ is thus an increasing function. With the same reasoning, $g_+$ is also increasing and $f_-,g_-$ are decreasing. $f_+$ is not bounded but it is non-negative and almost surely finite so it can be seen as an increasing limit of the sequence of functions $\min(f_+,k)$ when $k$ goes to infinity. These functions are non-negative, increasing and bounded so for any $k$ and any bounded increasing function $g$,~(\ref{CNA}) applies where $f$ is replaced by $\min(f_+,k)$. By a limiting argument, the same inequality holds true for $f=f_+$. We can also treat the other functions the same way and we get from~(\ref{CNA})
$$\cov(f_+(X),g_+(X))\leq 0~~\mbox{and}~~\cov(f_-(X),g_-(X))\leq 0.$$
Since these expressions are equal to
\begin{multline*}
\cov(f_\pm(X),g_\pm(X))=\cov(f(X\cap A),g(X\cap B))+\|f\|_A\|g\|_B\cov(N(A),N(B))\\
\pm (\|g\|_B\cov(f(X\cap A),N(B))+\|f\|_A\cov(N(A),g(X\cap B))),
\end{multline*}
adding these two expressions together yields the upper bound in~(\ref{NAbound}):
$$\cov(f(X\cap A),g(X\cap B))\leq -\|f\|_A\|g\|_B\cov(N(A),N(B)).$$
The lower bound is obtained by replacing $f$ by $-f$ in the previous expression.
\end{proof}

A similar inequality as in Theorem~\ref{NAmain} can also be obtained for complex-valued functions since $\|\Re(f)\|_A$ and $\|\Im(f)\|_A$ are bounded by $\|f\|_A$, where $\Re(f)$ and $\Im(f)$ refer to the real and imaginary part of $f$ respectively.

\begin{corr} \label{mainC}
Let $X$ be an associated point process and $A,B\subset\R^d$ be two disjoint bounded subsets. Let $f:\Omega\rightarrow\mathbb{C}$ and $g:\Omega\rightarrow\mathbb{C}$ be two functions such that $f(X\cap A)$ and $g(X\cap B)$ are bounded, then
$$|\cov(f(X\cap A),g(X\cap B))|\leq 4\|f\|_A\|g\|_B|\cov(N(A),N(B))|.$$
Moreover, if $X$ is PA then it also satisfies the same inequality for all $A,B\subset\R^d$ not necessarily disjoint.
\end{corr}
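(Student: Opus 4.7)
The plan is to reduce the complex-valued statement to four applications of Theorem~\ref{NAmain} on real-valued functions via decomposition into real and imaginary parts. Write $f = f_1 + i f_2$ with $f_1 = \Re(f)$, $f_2 = \Im(f)$, and likewise $g = g_1 + i g_2$. Both $f_1(X\cap A), f_2(X\cap A)$ are bounded (since $f(X\cap A)$ is) and $\F(A)$-measurable, and similarly for $g_1, g_2$ on $B$.

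Next, by bilinearity of the covariance (taken with the bilinear convention over $\mathbb{C}$),
\begin{equation*}
\cov(f(X\cap A), g(X\cap B)) = \sum_{j,k\in\{1,2\}} i^{j+k-2}\,\cov(f_j(X\cap A), g_k(X\cap B)),
\end{equation*}
so the triangle inequality gives
\begin{equation*}
|\cov(f(X\cap A), g(X\cap B))| \leq \sum_{j,k\in\{1,2\}} |\cov(f_j(X\cap A), g_k(X\cap B))|.
\end{equation*}

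The key observation is that the seminorm $\|\cdot\|_A$ contracts under taking real or imaginary parts: for any $X\subset A$ and $x\in A$,
\begin{equation*}
|f_j(X) - f_j(X\cup\{x\})| \leq |f(X) - f(X\cup\{x\})|,
\end{equation*}
so $\|f_j\|_A \leq \|f\|_A$ for $j=1,2$, and similarly $\|g_k\|_B \leq \|g\|_B$. Applying Theorem~\ref{NAmain} to each of the four real-valued covariances (the hypotheses are met since $f_j$, $g_k$ are real, bounded, and $A,B$ are disjoint bounded sets) bounds each term by $\|f\|_A \|g\|_B |\cov(N(A), N(B))|$, and summing the four terms yields the stated factor of $4$. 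The PA case is identical, using the second half of Theorem~\ref{NAmain} which does not require $A \cap B = \emptyset$.

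No real obstacle is expected: the only point to double-check is the contraction of $\|\cdot\|_A$ under $\Re, \Im$, already highlighted in the remark preceding the corollary, and the use of the bilinear (rather than sesquilinear) extension of $\cov$ to $\mathbb{C}$-valued variables, which only affects the signs $i^{j+k-2}$ and not the modulus bound.
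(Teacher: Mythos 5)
Your proof is correct and follows essentially the same route as the paper, which simply remarks that $\|\Re(f)\|_A$ and $\|\Im(f)\|_A$ are bounded by $\|f\|_A$ and leaves the four-term decomposition implicit. Your write-up just makes explicit the bilinear expansion, the triangle inequality, and the four applications of Theorem~\ref{NAmain} that produce the factor $4$.
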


If  the first two intensity functions of $X$ are well-defined then $D$ in \eqref{defD} is well-defined.
As a consequence of Theorem~\ref{NAmain} and from \eqref{eq:cov}, if $|D(x,y)|$ vanishes fast enough when $|y-x|$ goes to infinity then any two events respectively in $\F(A)$ and $\F(B)$ will get closer to independence as $\dist(A,B)$ tends to infinity, as specified by the following corollary.

\begin{corr} \label{mainDPP}
Let $X$ be an associated point process on $\R^d$ whose first two intensity functions are well-defined. Let $A,B$ be two bounded disjoint sets of $\R^d$ such that $\dist(A,B)\geq r$. Then, for all functions $f:\Omega\rightarrow\mathbb{R}$ and $g:\Omega\rightarrow\mathbb{R}$ such that $f(X\cap A)$ and $g(X\cap B)$ are bounded,
\begin{equation} \label{alpha+}
|\cov(f(X\cap A),g(X\cap B))|\leq \area_d\vol{A}\,\|f\|_A\|g\|_B \int_r^{\infty} t^{d-1}\sup_{|x-y|=t}|D(x,y)|\der t,
\end{equation}
where $\area_d$ is the $(d-1)$-dimensional area measure of the unit sphere in $\R^d$. Moreover, if $f$ and/or $g$ are complex-valued functions, the same inequality holds true with an extra factor 4 on the right hand side.
\end{corr}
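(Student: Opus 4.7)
The plan is to combine Theorem~\ref{NAmain} with the covariance identity \eqref{eq:cov} and then perform a polar-coordinate estimate of the resulting integral of $D$ over $A\times B$.

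First, applying Theorem~\ref{NAmain} reduces the problem to bounding $|\cov(N(A),N(B))|$ by $\area_d|A|\int_r^\infty t^{d-1}\sup_{|x-y|=t}|D(x,y)|\,\der t$. By \eqref{eq:cov} we have $\cov(N(A),N(B))=\int_{A\times B}D(x,y)\der x\der y$, so it suffices to show
\begin{equation*}
\int_A\int_B |D(x,y)|\,\der y\,\der x\leq \area_d|A|\int_r^\infty t^{d-1}\sup_{|x-y|=t}|D(x,y)|\,\der t.
\end{equation*}

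Next, fix $x\in A$. Since $\dist(A,B)\geq r$, for any $y\in B$ we have $|y-x|\geq r$, so $B\subset\{y\in\R^d:|y-x|\geq r\}$. Switching to spherical coordinates centred at $x$, writing $y=x+t\omega$ with $t\geq r$ and $\omega$ on the unit sphere, yields
\begin{equation*}
\int_B |D(x,y)|\,\der y\leq \int_r^\infty\int_{S^{d-1}}|D(x,x+t\omega)|\,t^{d-1}\,\der\omega\,\der t\leq \area_d\int_r^\infty t^{d-1}\sup_{|u-v|=t}|D(u,v)|\,\der t,
\end{equation*}
where the outer supremum no longer depends on $x$. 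Integrating this bound over $x\in A$ produces the factor $|A|$ and closes the estimate for real-valued $f,g$. The complex-valued case follows identically from Corollary~\ref{mainC}, which contributes the announced extra factor of $4$.

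There is no real obstacle here; the argument is a direct chain of reductions. The only point requiring a touch of care is justifying the passage from $B\subset\{|y-x|\geq r\}$ to the spherical-shell integral with the supremum outside, which is just Fubini together with bounding each spherical integrand by its supremum over the sphere of radius $t$.
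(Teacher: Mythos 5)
Your proposal is correct and follows essentially the same route as the paper: reduce to $|\cov(N(A),N(B))|$ via Theorem~\ref{NAmain} (and Corollary~\ref{mainC} for the complex case), use \eqref{eq:cov}, include $B$ in the complement of $\mathcal{B}(x,r)$, and pass to polar coordinates to extract $\area_d\int_r^\infty t^{d-1}\sup_{|u-v|=t}|D(u,v)|\der t$ together with the factor $|A|$. The only cosmetic difference is that the paper bounds $\sup_{x\in A}\int_B|D(x,y)|\der y$ before multiplying by $|A|$, whereas you integrate the uniform bound over $x\in A$; these are the same estimate.
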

\begin{proof}
Consider $A,B$ to be two bounded disjoint sets of $\R^d$ such that $\dist(A,B)\geq r$ then, from \eqref{eq:cov},
\begin{align*}
|\cov(N(A),N(B))|& =\left |\int_{A\times B} D(x,y) \der x\der y\right |\\
&\leq |A|\sup_{x\in A}\int_B |D(x,y)|\der y\\
&\leq |A|\sup_{x\in A}\int_{\mathcal{B}(x,r)^c} |D(x,y)|\der y\\
&\leq |A|\sup_{x\in A}\int_{\mathcal{B}(x,r)^c} \sup_{\substack{u\in \R^d \\ |u-x|=|y-x|}}|D(x,u)|\der y\\
&\leq |A|\area_d\int_r^{\infty} t^{d-1}\sup_{|u-v|=t}|D(u,v)|\der t.
\end{align*}
The final result is then a consequence of Theorem~\ref{NAmain} and Corollary~\ref{mainC}.
\end{proof}

\subsection{Application to \texorpdfstring{$\alpha$}{alpha}-mixing}
Let us first recall some generalities about mixing. Consider a probability space $(\mathcal{X},\mathcal{F},\mathbb{P})$ and $\A,\B$ two sub $\sigma$-algebras of $\mathcal{F}$. 
The $\alpha$-mixing coefficient is defined as the following measure of dependence between $\A$ and $\B$:
$$\alpha(\A,\B)\defeq\sup\{|\mathbb{P}(A\cap B)-\mathbb{P}(A)\mathbb{P}(B)|: A\in\A, B\in\B\}.$$
In particular, $\A$ and $\B$ are independent iff $\alpha(\A,\B)=0$. This definition leads to the essential covariance inequality due to Davydov \cite{Davydov} and later generalised by Rio \cite{Rio}: For all random variables $X,Y$ measurable with respect to $\A$ and $\B$ respectively,
\begin{equation} \label{covalpha}
|\cov(X,Y)|\leq 8\alpha^{1/r}(\A,\B)\|X\|_p\|Y\|_q,~~\mbox{where}~p,q,r\in [1,\infty]~\mbox{and}~\frac{1}{p}+\frac{1}{q}+\frac{1}{r}=1.
\end{equation}
This definition is adapted to random fields the following way (see~\cite{Doukhan} or~\cite{Guyon}). Let $Y=(Y_i)_{i\in\Z^d}$ be a random fields on $\Z^d$ and define
$$\alpha_{p,q}(r)\defeq\sup\{\alpha(\sigma(\{Y_i,i\in A\}),\sigma(\{Y_i,i\in B\})) : \vol{A}\leq p, \vol{B}\leq q, \mbox{dist}(A,B)>r\}$$
with the convention $\alpha_{p,\infty}(r)=\sup_q\alpha_{p,q}(r)$. The coefficients $\alpha_{p,q}(r)$ describe how close two events happening far enough from each other are from being independent. The parameters $p$ and $q$ play an important role since, in general, we cannot get any information directly on the behaviour of $\alpha_{\infty,\infty}(r)$.

We can adapt this definition to point processes the following way. For a point process $X$ on $\R^d$, define
$$\alpha_{p,q}(r)\defeq\sup\{\alpha(\F(A),\F(B)) : \vol{A}\leq p, \vol{B}\leq q, \mbox{dist}(A,B)>r\}$$
with the convention $\alpha_{p,\infty}(r)=\sup_q\alpha_{p,q}(r)$.

As a consequence of Corollary~\ref{mainDPP},
the $\alpha$-mixing coefficients of an associated point process tend to $0$ when $D(x,y)$ vanishes fast enough as $|y-x|$ goes to infinity. More precisely, we have the following inequalities.

\begin{prop} \label{sidePP}
Let $X$ be an associated point process on $\R^d$ whose first two intensity functions are well-defined, then for all $p,q>0$,
\begin{equation} \label{alpha}
\left \{ \begin{tabular}{l} $\displaystyle\alpha_{p,q}(r)\leq pq\sup_{|x-y|\geq r}|D(x,y)|$, \\  $\displaystyle\alpha_{p,\infty}(r)\leq p\area_d\int_r^{\infty} t^{d-1}\sup_{|x-y|=t}|D(x,y)|\der t$. \end{tabular} \right .
\end{equation}
\end{prop}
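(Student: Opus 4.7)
The plan is to reduce the $\alpha$-mixing bounds to the covariance inequalities already established, by applying them to indicator functions of events. Recall that for two sub-$\sigma$-algebras,
$$\alpha(\F(A),\F(B))=\sup\{|\cov(\cara{E_1},\cara{E_2})|:E_1\in\F(A),\,E_2\in\F(B)\},$$
since $\mathbb{P}(E_1\cap E_2)-\mathbb{P}(E_1)\mathbb{P}(E_2)=\cov(\cara{E_1},\cara{E_2})$. So it suffices to bound the covariance of any pair of $[0,1]$-valued indicators that are $\F(A)$- and $\F(B)$-measurable, respectively.

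Since $E_1\in\F(A)$, the indicator $\cara{E_1}$ depends only on $X\cap A$ and can therefore be written as $f(X\cap A)$ for some $f:\Omega\to\R$ bounded by $1$. As $\|f\|_A\leq 1$ trivially (the seminorm measures the jump when adding a point, which for a $\{0,1\}$-valued function is at most $1$), and similarly for $g=\cara{E_2}$ with $\|g\|_B\leq 1$, Theorem~\ref{NAmain} applies and yields
$$|\cov(\cara{E_1},\cara{E_2})|\leq|\cov(N(A),N(B))|.$$

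For the first inequality of \eqref{alpha}, I would bound the right-hand side using \eqref{eq:cov} and the assumption $\dist(A,B)>r$:
$$|\cov(N(A),N(B))|\leq\int_{A\times B}|D(x,y)|\der x\der y\leq|A||B|\sup_{|x-y|\geq r}|D(x,y)|\leq pq\sup_{|x-y|\geq r}|D(x,y)|,$$
which, taking the supremum over $E_1,E_2$ and over admissible $A,B$, gives the desired estimate.

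For the second inequality, rather than going through $|\cov(N(A),N(B))|$, I would apply Corollary~\ref{mainDPP} directly to $f=\cara{E_1}$ and $g=\cara{E_2}$: this immediately produces the radial integral bound $|A|\,\area_d\int_r^\infty t^{d-1}\sup_{|x-y|=t}|D(x,y)|\der t$, and taking $|A|\leq p$ together with the supremum over events yields the claimed bound on $\alpha_{p,\infty}(r)$. There is no real obstacle here: everything reduces to verifying that indicator functions are legitimate test functions with seminorm at most $1$, and then invoking the inequalities proved in the previous subsection.
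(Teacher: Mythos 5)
Your proof is correct and follows essentially the same route as the paper: identify $\alpha(\F(A),\F(B))$ with a supremum of covariances of indicator functions, observe these have $\|\cdot\|_A$-seminorm at most $1$, and invoke Theorem~\ref{NAmain} (together with \eqref{eq:cov}) for the first bound and Corollary~\ref{mainDPP} for the second. The paper's proof is just a terser version of exactly this argument.
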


\begin{proof}
We can write
$$\alpha(\F(A),\F(B))=\sup_{\substack{\A\in\F(A) \\ \B\in\F(B)}}\cov(\cara{\A}(X\cap A),\cara{\B}(X\cap B))$$
so Proposition \ref{sidePP} is a direct consequence of Theorem~\ref{NAmain} and Corollary~\ref{mainDPP} applied to indicator functions.
\end{proof}

\section{Central limit theorem for associated point processes} \label{sec:CLT}

Consider the lattice $(x_i)_{i\in\Z^d}$ defined by $x_i=R\cdot i$, where $R>0$ is a fixed constant. 
We denote by $C_i$, $i\in\Z^d$, the $d$-dimensional cube with centre $x_i$ and side length $s$, where $s\geq R$ is another fixed constant. Note that the union of these cubes forms a covering of $\R^d$. 
Let $X$ be an associated point process and $(f_i)_{i\in\Z^d}$ be a family of real-valued measurable functions defined on $\Omega$. We consider the centred random field $(Y_i)_{i\in\Z^d}$ defined by 
\begin{equation} \label{defYi}
Y_i\defeq f_i(X\cap C_i)-\E[f_i(X\cap C_i)],~~i\in\Z^d,
\end{equation}
and we are interested in this section by the asymptotic behavior of $S_n\defeq\sum_{i\in I_n}Y_i$, where $(I_n)_{n\in\N}$ is a sequence of strictly increasing finite domains of $\Z^d$.

As a consequence of Proposition~\ref{sidePP}, we could directly use one of the different CLT for $\alpha$-mixing random fields that already exist in the literature \cite{Bolt, Doukhan, Guyon} to get the asymptotic distribution of $S_n$. But, the coefficients $\alpha_{p,\infty}$ decreasing much slower than the coefficients $\alpha_{p,q}$, this would imply an unnecessary strong assumption on $D$. Precisely, this would require $D(x,y)$ to decay at a rate at least $o(|y-x|^{-2(d+\epsilon) \frac{2+\delta}{\delta}})$, where $\epsilon>0$ and $\delta$ is a positive constant depending on the behaviour of the moments of $X$.
In the next theorem, we bypass this issue by exploiting both the behaviour of the mixing coefficients $\alpha_{p,q}$ when $p<\infty$ and $q<\infty$, and the association property through inequality~(\ref{alpha+}). We show that we can still get a CLT when $D(x,y)$ decays at a rate $o(|y-x|^{-(d+\epsilon) \frac{2+\delta}{\delta}})$. This improvement is important to include DPPs with a slow decaying kernel, thus inducing more repulsiveness, such as Bessel-type kernels, see the applications to DPPs in Section~\ref{sec:CLTDPP} and especially the discussion at the end of the section. Let us also remark that another technique, based on the convergence of moments, is sometimes used to establish a CLT for point processes. This has been exploited especially for Brillinger mixing point processes in \cite{jolivetTCL, StellaKleinempirical2011} and other papers. As an example, DPPs have been proved to be Brillinger mixing in \cite{BiscioMix, Zweirich}. However, this condition applies to stationary point processes only.

\begin{theo} \label{CLT}
Consider the random field $Y$ given by \eqref{defYi}, a sequence $(I_n)_{n\in\N}$ of strictly increasing finite domains of $\Z^d$ and $S_n = \sum_{i\in I_n}Y_i$. Let $\sigma^2_n\defeq\var(S_n)$. Assume that for some $\epsilon,\delta>0$ the following conditions are satisfied:
\begin{enumerate}[label=(C\arabic*)]
\item $X$ is an associated point process on $\R^d$ whose first two intensity functions are well-defined; \label{hyp0}
\item $\sup_{i\in\Z^d} \|Y_i\|_{2+\delta}=M< \infty$; \label{hyp1}
\item $\sup_{|x-y|\geq r}|D(x,y)|=\underset{r\rightarrow \infty}{o}(r^{-(d+\epsilon) \frac{2+\delta}{\delta}})$ where $D$ is given by \eqref{defD}; \label{hyp2}
\item $\liminf_{n}|I_n|^{-1}\sigma_n^2>0$. \label{hyp3}
\end{enumerate}
Then
$$\frac{1}{\sigma_n}S_n\cvlaw\mathcal{N}(0,1).$$
\end{theo}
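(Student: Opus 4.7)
My plan is to use the classical Bernstein big-block/small-block method for CLTs on $\Z^d$ combined with a characteristic function argument, where the key point is that --- thanks to the association property --- the telescoping error can be controlled by pairwise covariances between cubes of finite volume only. This lets one invoke the finite-volume mixing coefficient $\alpha_{p,q}$ from Proposition~\ref{sidePP} rather than $\alpha_{p,\infty}$, which is precisely where the gain in the decay rate in (C3) (compared to a direct application of a standard CLT for $\alpha$-mixing random fields) comes from.

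Concretely I would fix two sequences $p_n, q_n \to \infty$ with $q_n/p_n \to 0$ and $p_n^d = o(|I_n|)$, tile $\Z^d$ with cubes of side $p_n+q_n$ each consisting of a big sub-cube of side $p_n$ and a corridor of width $q_n$, and write $S_n = U_n + V_n$, where $U_n$ and $V_n$ gather the $Y_i$ with $i$ in a big sub-cube and in a corridor respectively. Davydov's inequality with exponent triple $(2+\delta, 2+\delta, (2+\delta)/\delta)$, the moment bound (C2), and Proposition~\ref{sidePP} together with the decay (C3), show that $\sup_j \sum_{i\ne j}|\cov(Y_i,Y_j)| < \infty$. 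Hence $\var(V_n) = O(|I_n|\,q_n/p_n) = o(|I_n|)$, and by (C4) one gets $V_n/\sigma_n \to 0$ in $L^2$, so $V_n$ can be discarded.

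Writing $\phi_n(t) := \E[e^{itU_n/\sigma_n}]$ and $\psi_n(t) := \prod_k \E[e^{itU_n^{(k)}/\sigma_n}]$, where $U_n^{(k)}$ is the $k$th big-cube contribution, the standard Bernstein telescoping bounds $|\phi_n - \psi_n|$ by $\sum_k |\cov(e^{itU_n^{(k)}/\sigma_n}, \prod_{j\ne k} e^{itU_n^{(j)}/\sigma_n})|$. I would further reduce --- either by a secondary telescoping, or by truncating the product to blocks within a ball of radius $R_n$ around each block (the outer tail being controlled through the association bound~(\ref{alpha+}) of Corollary~\ref{mainDPP}) --- to a double sum of pairwise covariances $|\cov(e^{itU_n^{(j)}/\sigma_n}, e^{itU_n^{(k)}/\sigma_n})|$. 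Each pairwise term, between two cubes of volume of order $p_n^d$ separated by distance at least $q_n$, is bounded via Davydov applied to the block-to-block coefficient $\alpha_{p_n^d,p_n^d}(q_n) \leq C p_n^{2d}\sup_{|x-y|\geq cq_n}|D(x,y)|$ coming from Proposition~\ref{sidePP}; under (C3) this decays at rate $p_n^{2d}q_n^{-(d+\epsilon)(2+\delta)/\delta}$, and a suitable calibration of $p_n, q_n$ makes the total error vanish.

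Once $\phi_n - \psi_n \to 0$ it remains to prove $\psi_n(t) \to e^{-t^2/2}$, i.e.\ a CLT for independent surrogates $\tilde U_n^{(k)} \sim U_n^{(k)}$, which follows from Lindeberg--Feller: the asymptotic variance equals $\sigma_n^2$ by the preceding step, and the Lindeberg condition is verified by Markov's inequality together with a Rosenthal-type control of $\|U_n^{(k)}\|_{2+\delta}$ derived from (C2) and the summable covariances already obtained. The hard part is the reduction in the third paragraph: the telescoping must keep both sides of each covariance to finite volume, because otherwise one is forced to use $\alpha_{p,\infty}$ and hence to assume the much stronger decay $|D(x,y)| = o(|y-x|^{-2(d+\epsilon)(2+\delta)/\delta})$. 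This reduction to pairwise block covariances, enabled by the association property via~(\ref{alpha+}), is exactly what produces the factor-two improvement over the standard $\alpha$-mixing CLT.
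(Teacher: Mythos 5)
You have correctly identified the crux --- the association covariance inequality~(\ref{alpha+}) must be used somewhere to avoid paying the price of $\alpha_{p,\infty}$ --- but the Bernstein big-block scheme you build around it does not close under the stated hypotheses. The failing step is the reduction to pairwise block covariances controlled by $\alpha_{p_n^d,p_n^d}(q_n)\leq C\,p_n^{2d}\sup_{|x-y|\geq c q_n}|D(x,y)|$. There are $\asymp |I_n|/p_n^d$ blocks, so even in the bounded case the accumulated error from neighbouring block pairs is of order $|I_n|\,p_n^{d}\sup_{|x-y|\geq c q_n}|D(x,y)|$. Under~\ref{hyp2} this is $|I_n|\,p_n^{d}\,\petito(q_n^{-\beta})$ with $\beta=(d+\epsilon)(2+\delta)/\delta$, and with the constraints $q_n=\petito(p_n)$ (needed for the corridors to be negligible) and $p_n^d=\petito(|I_n|)$ (needed to have many blocks), this quantity can be made to vanish only when $\beta>2d$, i.e.\ $\delta(d-\epsilon)<2(d+\epsilon)$. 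For $\epsilon<d$ and $\delta$ large (and a fortiori in the bounded case, where $\beta=d+\epsilon$ and one would need $\epsilon>d$) no calibration of $p_n,q_n$ works: the quadratic volume factor $p_n^{2d}$ in Proposition~\ref{sidePP} grows faster than the decay of $D$ compensates, and passing through Davydov with $L^{2+\delta}$ norms of the blocks does not rescue this. In addition, the ``secondary telescoping'' of $\cov(e^{itU_n^{(k)}/\sigma_n},\prod_{j\neq k}e^{itU_n^{(j)}/\sigma_n})$ into a double sum of pairwise covariances is not an identity --- $\cov(f,gh)\neq\cov(f,g)\E[h]+\cov(f,h)\E[g]$ --- and each correction term again involves a covariance against a product over many blocks, so this reduction itself requires an argument that is missing.

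What makes the paper's proof work is that inequality~(\ref{alpha+}) is applied in a configuration where both of its structural advantages are active at once: the volume factor is that of a single cube $C_j$ of \emph{constant} volume $s^d$ (not $p_n^d$), and the second function is the exponential of the \emph{normalized} far-field sum, whose Lipschitz seminorm $\|\cdot\|_{B_j}$ is $O(1/\sqrt{a_n})=O(|I_n|^{-1/2})$ --- a gain that no $\alpha$-mixing coefficient can see, since the indicators defining $\alpha$ have Lipschitz seminorm of order one. Concretely, the paper follows Bolthausen's Stein-type argument, proving $\E[(i\lambda-\bar S_n)e^{i\lambda\bar S_n}]\to0$: the terms $A_1,A_2$ only involve $\alpha^Y_{p,q}$ with $p+q\leq4$, and the single term $A_3$ involving an unbounded region is bounded in~\eqref{good} by $|I_n|/a_n$ times a tail integral of $t^{d-1}\sup_{|x-y|\geq t}|D(x,y)|$, which vanishes under the mere integrability implied by~\ref{hyp2}; a separate truncation step then removes the boundedness assumption. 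If you wish to keep a blocking scheme, you must arrange for every problematic covariance to have one argument supported on a set of bounded volume and the other argument carrying a $1/\sigma_n$ Lipschitz constant; as written, your pairwise-block step is the gap.
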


\begin{proof}
First, we notice that $Y$ inherits its strong mixing coefficients from $X$. This is due to the fact that we have $\sigma(\{Y_i: i\in I\})\subset\F(\bigcup_{i\in I}C_i)$ for all $I\subset\Z^d$ as a consequence of~(\ref{defYi}). Moreover, we have $\dist(C_i,C_j)\geq\frac{1}{\sqrt{d}}(\lun{i-j}R-sd)$ as a consequence of Lemma~\ref{distcarre}, and since $|\bigcup_{i\in I}C_i|\leq s^d|I|$, this gives us the inequality
$$\forall p,q>0,~\forall r>\frac{sd}{R},~~\alpha^Y_{p,q}(r)\leq \alpha^X_{ps^d,qs^d}\left(\frac{1}{\sqrt{d}}(rR-sd)\right),$$
where we denote by $\alpha^X,\alpha^Y$ the $\alpha$-mixing coefficients of $X$ and $Y$ respectively. In particular, conditions~\ref{hyp0}, \ref{hyp2} and identity~\eqref{alpha} yields
\begin{equation} \label{alphaY}
\forall p,q>0,~\alpha^Y_{p,q}(r)=\underset{r\rightarrow \infty}{o}(r^{-(d+\epsilon) \frac{2+\delta}{\delta}}).
\end{equation}

We deal with the proof in two steps: first, we consider the case of bounded variables and then we extend the result to the more general case.

The first step of the proof follows the approach used by Bolthausen~\cite{Bolt} and Guyon~\cite{Guyon}, while the second step exploits elements from~\cite{Ibragimov}. The main difference lies in the way we deal with the term $A_3$ that appears later on in the proof.

\medskip

\underline{First step: Bounded variables.} Without loss of generality, we consider that $\E[f_i(X\cap C_i)]=0$ for all $i\in\Z^d$. Suppose that we have $\sup_i \|Y_i\|_{\infty}\defeq\sup_i \|f_i(.\cap C_i)\|_{\infty}=M <\infty$ instead of Assumption~\ref{hyp1}. Since $\alpha^Y_{p,q}(r)$ is non increasing in $r$ and is a $o(r^{-d})$ by~\eqref{alphaY}, we can choose a sequence $(r_n)_{n\in\N}$ such that
$$\alpha_{p,q}^Y(r_n)\sqrt{|I_n|}\rightarrow 0~~\mbox{and}~~r_n^{-d}\sqrt{|I_n|}\rightarrow \infty.$$
For $i\in\Z^d$, define
$$S_{i,n}=\sum_{\substack{j\in I_n \\ \lun{i-j}\leq r_n}} Y_j,\quad S^*_{i,n}=S_n-S_{i,n},\quad a_n=\sum_{i\in I_n}\E[Y_iS_{i,n}],\quad \bar{S}_n=\frac{1}{\sqrt{a_n}}S_n,\quad \bar{S}_{i,n}=\frac{1}{\sqrt{a_n}}S_{i,n}.$$
We have $\sigma_n^2=\var(S_n)=a_n+\sum_{i\in I_n}\E[Y_i S_{i,n}^*]$ and, as a consequence of the typical covariance inequality~(\ref{covalpha}) for $\alpha$-mixing random variables, we get
$$\left | \sum_{i\in I_n} \E[Y_i S_{i,n}^*]\right |\leq\hspace{-0.2cm}\sum_{\substack{i,j\in I_n \\ \lun{i-j}>r_n}}\hspace{-0.2cm}|\cov(Y_i,Y_j)|\leq 8 M^2\hspace{-0.2cm}\sum_{\substack{i,j\in I_n \\ \lun{i-j}>r_n}}\hspace{-0.2cm}\alpha^Y_{1,1}(\lun{i-j})
\leq 8 M^2|I_n|\sum_{r>r_n}|\{k\in\Z^d:|k|_1=r\}|\alpha^Y_{1,1}(r).$$
The number of $k\in\Z^d$ satisfying $|k|_1=r$ is bounded by $2(2r+1)^{d-1}$. This is because each of the $d-1$ first coordinates of $k$ takes its values in $\{-r,\cdots,r\}$ and the last coordinate is fixed by the other ones, up to the sign, since $|k|_1=r$. Therefore,
$$\left | \sum_{i\in I_n} \E[Y_iS_{i,n}^*]\right |\leq 16 M^2|I_n|\sum_{r>r_n}(2r+1)^{d-1}\alpha^Y_{1,1}(r).$$
By Assumption~\eqref{alphaY}, this quantity is $o(|I_n|)$ and thus $\sigma_n^2\sim a_n$ as a consequence of Assumption~\ref{hyp3}. We then only need to prove the asymptotic normality of $\widebar{S}_n$. Moreover, since $\sup_n \E[\widebar{S}_n^2]<\infty$ then this will be a consequence of the following condition (see \cite{CLT,Bolt})
$$\lim_{n\rightarrow \infty}\E[(i\lambda-\widebar{S}_n)\exp(i\lambda\widebar{S}_n)]=0,~~ \forall\lambda\in\R.$$
We can split this expression into $(i\lambda-\widebar{S}_n)\exp(i\lambda\widebar{S}_n)=A_1-A_2-A_3$ where
$$\left \{ \begin{tabular}{l} $\displaystyle A_1=i\lambda\exp(i\lambda\widebar{S}_n)\left (1-\frac{1}{a_n}\sum_{j\in I_n}Y_jS_{j,n}\right ),$ \\ 
$\displaystyle A_2=\frac{1}{\sqrt{a_n}}\exp(i\lambda\widebar{S}_n)\sum_{j\in I_n}Y_j\left (1-i\lambda\widebar{S}_{j,n}-\exp(-i\lambda\widebar{S}_{j,n}) \right ),$  \\ 
$\displaystyle A_3=\frac{1}{\sqrt{a_n}}\sum_{j\in I_n}Y_j \exp\left (i\lambda(\widebar{S}_n-\widebar{S}_{j,n})\right ).$ \end{tabular} \right .$$
It was proved by Bolthausen~\cite{Bolt} that $\E[A_1^2]$ and $\E[|A_2|]$ vanish when $n$ goes to infinity if $\sum r^{d-1}\alpha^Y_{p,q}(r)<\infty$ for $p+q\leq 4$ which is the case here. We show that $\E[A_3]$ vanishes at infinity using~(\ref{alpha+}). Notice that we have
$$|\E[A_3]|\leq \frac{|I_n|}{\sqrt{a_n}}\sup_{j\in I_n} \sizeL |\cov\sizeL (f_j(X\cap C_j),\exp\sizeL ( \frac{i\lambda}{\sqrt{a_n}}\sum_{\substack{k\in I_n \\ \lun{k-j}>r_n}} f_k(X\cap C_k) \sizeR )\sizeR )\sizeR |.$$
Define the function 
$$g_j:X\mapsto \exp\sizeL ( \frac{i\lambda}{\sqrt{a_n}}\sum_{\substack{k\in I_n \\ \lun{k-j}>r_n}} f_k(X\cap C_k) \sizeR ).$$
This function is bounded by $1$ and $\F(B_j)$-measurable where $B_j\defeq\bigcup_{k\in I_n,\,  \lun{k-j}>r_n}C_k$ is a bounded set and $\dist(C_j,B_j)$ $\geq(Rr_n-sd)/\sqrt{d}$ (see Lemma~\ref{distcarre}). We have $\|f_j\|_{C_j}\leq 2M$ and for all $X\in\Omega$, for all $x\in B_j$, if we denote by $J_x=\{k: x\in C_k\}$ the set of cubes that contain $x$ then
$$|g_j(X\cup\{x\})-g_j(X)|=\left|1-\exp\left(\frac{i\lambda}{\sqrt{a_n}}\sum_{k\in J_x}(f_k(X\cap C_k\cup\{x\})-f_k(X\cap C_k))\right)\right|\leq\frac{2\lambda M|J_x|}{\sqrt{a_n}}.$$
Lemma~\ref{distcarre} gives us the bound $|J_x|\leq(2sd/R+1)^d$ and thus $\|g_j\|_{B_j}\leq2\lambda M(2sd/R+1)^d/\sqrt{a_n}$. Finally, using Corollary~\ref{mainDPP} we get
\begin{align}
|\E[A_3]|&\leq \frac{4|I_n|s_d}{\sqrt{a_n}}|C_j|\|f_j\|_{C_j}\|g_j\|_{B_j}\int_{\dist(B_j,C_j)}^{\infty}t^{d-1}\sup_{|x-y|\geq t}|D(x,y)|\der t\nonumber\\
&\leq 16s_d M^2\left(\frac{2s^2d}{R}+s\right)^d\lambda\frac{|I_n|}{a_n}\int_{\frac{1}{\sqrt{d}}(Rr_n-sd)}^{\infty} t^{d-1}\sup_{|x-y|\geq t}|D(x,y)|\der t.\label{good}
\end{align}
By assumption~\ref{hyp2} we have that $t^{d-1}\sup_{|x-y|\geq t}|D(x,y)|$ is integrable and by assumption~\ref{hyp3} we have $|I_n|=O(a_n)$ which shows that $\lim_{n\rightarrow \infty} \E[A_3]=0$ concluding the proof of the theorem for bounded variables.

\medskip

\underline{Second step: General Case.} For $N>0$, we define
$$\left \{ \begin{tabular}{l} $S_{1,n}\defeq\sum_{i\in I_n}(F_N(Y_i)-\E[F_N(Y_i)])~~\mbox{where}~~F_N:x\mapsto x\cara{|x|\leq N},$ \\ 
$S_{2,n}:=\sum_{i\in I_n}(\tilde{F}_N(Y_i)-\E[\tilde{F}_N(Y_i)])~~\mbox{where}~~\tilde{F}_N:x\mapsto x\cara{|x|>N}.$ \end{tabular} \right .$$
Let $\sigma^2_n(N)\defeq\var(S_{1,n})$, from the first step of the proof we have $\sigma_n(N)^{-1}S_{1,n}\overset{\mathcal{L}}{\longrightarrow}\mathcal{N}(0,1)$. Let $1>\gamma>(1+\frac{\epsilon}{d}(1+\frac{\delta}{2}))^{-1}$ and define $C_N\defeq\sup_{i} \|Y_i\cara{|Y_i|>N}\|_{2+\delta\gamma}$. By assumption~\ref{hyp1} we have that $C_N$ vanishes when $N\rightarrow \infty$ and by assumption~\ref{hyp3} we have that $|I_n|\leq c\sigma_n^2$ for a sufficiently large $n$, where $c$ is a positive constant. By~(\ref{covalpha}),
\begin{align*}
\frac{1}{\sigma_n^2}\var(S_{2,n})&=\frac{1}{\sigma_n^2}\sum_{i,j\in I_n}\cov(\tilde{F}_N(Y_i),\tilde{F}_N(Y_j))\\
&\leq\frac{|I_n|}{\sigma_n^2}C_N^2\sup_{i\in I_n}\sum_{j\in I_n} 8\alpha^Y_{1,1}(\lun{i-j})^{\frac{\delta\gamma}{2+\delta\gamma}}\\
&\leq\ 16\,c\,C_N^2\sum_{r=0}^{\infty} (2r+1)^{d-1} \alpha^Y_{1,1}(r)^{\frac{\delta\gamma}{2+\delta\gamma}}.
\end{align*}
By assumption~\ref{hyp2} and the choice of $\gamma$ we have $\sum (2r+1)^{d-1}\alpha^Y_{1,1}(r)^{\frac{\delta\gamma}{2+\delta\gamma}}<\infty$ so $\sigma_n^{-1}S_{2,n}$ converges in mean square to $0$  when $N$ goes to infinity, uniformly in $n$. With the same reasoning, we also get the inequality
$$\frac{1}{\sigma_n^2}\left |\cov(S_{1,n},S_{2,n})\right |\leq\ 16\,c\,MC_N\sum_{r=0}^{\infty} (2r+1)^{d-1} \alpha^Y_{1,1}(r)^{\frac{\delta\gamma}{2+\delta\gamma}},$$
where the right hand side tends to 0 when $N$ goes to infinity, uniformly in $n$. Hence $\sigma_n^2(N)$ tends to $\sigma_n^2$ uniformly in $n$ as $N$ goes to infinity. 

Finally, for all constants $\nu>0$ arbitrary small, we can choose $N$ such that $\E[\sigma_n^{-1}|S_{2,n}|]\leq \nu$ and $|1-\sigma_n(N)/\sigma_n|\leq \nu$ for all $n$ sufficiently large. By looking at the characteristic function of $\sigma_n^{-1}S_n$ we get
\begin{align*}
\left |\E\left (e^{\frac{ixS_n}{\sigma_n}}\right )-e^{-\frac{1}{2}x^2} \right |
&\leq \left |\E\left (e^{\frac{ixS_{1,n}}{\sigma_n}}\right )-\E\left (e^{\frac{ixS_{1,n}}{\sigma_n(N)}}\right ) \right |
+\left |\E\left (e^{\frac{ixS_{1,n}}{\sigma_n(N)}}\right )-e^{-\frac{1}{2}x^2} \right |
+\left |\E\left (e^{\frac{ixS_{2,n}}{\sigma_n}}-1\right ) \right |\\
&\leq x\E\left(\left |\frac{S_{1,n}}{\sigma_n(N)}\right |\right)\left|1-\frac{\sigma_n(N)}{\sigma_n}\right|+o(1)+x \nu\\
&\leq 2x \nu+o(1)
\end{align*}
concluding the proof.
\end{proof}

\section{Application to determinantal point processes} \label{sec:DPP}
In this section, we give a CLT for a wide class of functionals of DPPs. This result is a key tool for the asymptotic inference of DPPs. As an application treated in Section~\ref{waag}, we get the consistency and the asymptotic normality of the two-step estimation method of~\cite{waag} for a parametric inhomogeneous DPP.

\subsection{Negative association and \texorpdfstring{$\alpha$}{alpha}-mixing for DPPs}
We recall that a DPP $X$ on $\R^d$ is defined trough its intensity functions with respect to the Lebesgue measure that must satisfy
$$\forall n\in\N,~\forall x\in (\R^d)^n,~\rho_n(x_1,\cdots,x_n)=\det(K[x])~~\mbox{with}~K[x]\defeq(K(x_i,x_j))_{i,j\in \{1,\cdots,n\}}.$$
The function $K:(\R^d)^2\rightarrow\mathbb{C}$ is called the kernel of $X$ and is assumed to satisfy the following standard general condition ensuring the existence of $X$.

\medskip

\noindent {\bf Condition $\mathcal H$}: The function $K:(\R^d)^2\rightarrow\mathbb{C}$ is a locally square integrable hermitian measurable function such that its associated integral operator $\mathcal{K}$ is locally of trace class with eigenvalues in $[0,1]$. 

\medskip

\noindent  This condition is not necessary for existence, in particular there are examples of DPPs having a non-hermitian kernel. It is nonetheless very general and is assumed in most studies on DPPs. Basic properties of DPPs can be found in~\cite{Hough,Shirai,Lyons}. In particular, from \cite[Theorem 1.4]{NA} and~\cite[Theorem 3.7]{Lyons}, we know that DPPs are NA.

\begin{theo}[\cite{NA,Lyons}]
Let $K$ satisfy Condition $\mathcal H$, then a DPP with kernel $K$ is NA.
\end{theo}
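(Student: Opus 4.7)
The plan is to reduce the continuous statement to its finite analogue and then approximate. First, I would establish the finite-set version: for a finite set $E$ and a hermitian matrix $K$ indexed by $E$ with spectrum in $[0,1]$, the determinantal measure $\mu_K$ on $2^E$ characterized by $\mu_K(\{S:S\supseteq T\})=\det(K_T)$ is negatively associated, in the sense that $\E[FG]\leq\E[F]\E[G]$ for all coordinate-wise increasing functions of disjoint coordinate blocks. The classical route goes through the strong Rayleigh property of Borcea--Br\"and\'en--Liggett \cite{NA}: one shows that the generating polynomial $Z_K(z)=\sum_{S\subseteq E}\mu_K(S)\prod_{i\in S}z_i$ is real stable (essentially from Cauchy--Binet together with preservation of stability under the natural operations), and the implication ``real stability $\Rightarrow$ NA'' closes the argument. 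A more self-contained route is the exterior-algebra argument of \cite[Theorem 3.7]{Lyons}, which derives NA directly from the alternating structure of determinants via an explicit mixed-moment computation.

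With the finite case granted, the continuum extension proceeds by discretization. Fix pairwise disjoint bounded Borel sets $A_1,\ldots,A_k,B_1,\ldots,B_l$, contained in a compact window $W$. For each $n$, partition $W$ into cubes $(\delta^n_m)_{m=1}^{M(n)}$ of side length $2^{-n}$, refined so that every cube lies entirely within one $A_i$, within one $B_j$, or outside the union. The occupation vector $\xi^n=(\cara{N(\delta^n_m)\geq 1})_m$ is itself a discrete determinantal process on $\{1,\ldots,M(n)\}$, the relevant kernel being a matrix obtained by integrating $K$ against the indicators $\cara{\delta^n_m}$; this compression fact is standard in DPP theory (see \cite{Hough,Shirai}). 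Hence the finite-case NA applies to $\xi^n$. On the other hand, $\sum_m(N(\delta^n_m)-\xi^n_m)$ tends to $0$ in $L^1$ as $n\to\infty$, because $\E[N(\delta^n_m)(N(\delta^n_m)-1)]\leq\int_{\delta^n_m\times\delta^n_m}\rho_2$ and the local integrability of $\rho_2$ ensured by Condition $\mathcal H$ makes the sum over cubes in $W$ vanish.

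Finally, passage to the limit. Denoting $N^{(n)}(A)\defeq\sum_{\delta^n_m\subseteq A}\xi^n_m$, the finite-case NA yields $\E[F(N^{(n)}(A_i))G(N^{(n)}(B_j))]\leq\E[F(N^{(n)}(A_i))]\,\E[G(N^{(n)}(B_j))]$ for every coordinate-wise increasing bounded $F,G$ as in the statement. Since $N^{(n)}(A_i)\to N(A_i)$ in $L^1$ for each $i$ (and similarly for the $B_j$), and $F,G$ are bounded, a standard approximation of $F,G$ by coordinate-wise increasing continuous functions lets the inequality pass to the limit, giving \eqref{NAvec} for $X$. The main obstacle is unambiguously the first step: establishing real stability (equivalently, strong Rayleigh or NA) for arbitrary finite DPPs is the genuine combinatorial/analytic content of \cite{NA,Lyons}. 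Once it is granted, the continuum extension reduces to a routine compression-plus-approximation argument that uses only the local integrability of $\rho_1$ and $\rho_2$.
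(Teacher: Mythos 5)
A preliminary remark: the paper does not prove this theorem at all — it is imported from \cite[Theorem 1.4]{NA} and \cite[Theorem 3.7]{Lyons} — so your decision to treat the finite (strong Rayleigh / exterior algebra) case as the granted core is consistent with the paper's own treatment. The problem lies in the step you present as routine. Your compression claim is false: the occupation vector $\xi^n=(\cara{N(\delta^n_m)\geq 1})_m$ of a continuous DPP over a partition is \emph{not} a discrete determinantal process with kernel obtained by integrating $K$ against the cell indicators, and no such fact appears in \cite{Hough} or \cite{Shirai}. What those references do give is that the restriction of a DPP to a measurable subset is again determinantal, and that the compressed matrix $\tilde K_{ml}=|\delta^n_m|^{-1/2}|\delta^n_l|^{-1/2}\int_{\delta^n_m\times\delta^n_l}K(x,y)\,\der x\der y$ is a hermitian contraction — but the finite determinantal measure $\mu_n$ it defines is an \emph{auxiliary} measure, not the law of $\xi^n$. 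Already for a single cell, $\mathbb{P}(N(\delta)\geq 1)=1-\det(\mathrm{Id}-\mathcal{K}_{\delta})$ is a Fredholm determinant while $\tilde K_{mm}$ is essentially $\Tr(\mathcal{K}_{\delta_m})$; for a rank-one projection kernel $K(x,y)=\phi(x)\overline{\phi(y)}$ and two cells, the marginal $\mathbb{P}(\xi_1=1)=\int_{\delta_1}|\phi|^2$ differs from $\tilde K_{11}=|\langle\cara{\delta_1},\phi\rangle|^2/|\delta_1|$ unless $\phi$ is constant on the cell. Since $\xi^n$ is not determinantal, the finite-case NA does not apply to it, and your limit passage has nothing to feed on.

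The known repair is not free: one takes $\mu_n$ itself as the approximating object (placing a point in each cell it occupies) and proves that $\mu_n$ converges weakly to $X$ as the mesh tends to zero, then transfers NA through the weak limit. Establishing that convergence requires identifying the limit via correlation functions or Laplace functionals, and it is precisely this discrete-to-continuous transfer (or Lyons's direct continuum argument through the exterior algebra of $L^2$) that constitutes the content of the cited theorems beyond the finite case. Your $L^1$ bound on $\sum_m(N(\delta^n_m)-\xi^n_m)$ correctly controls multiple occupancy and would compare $X$ with its own occupation vector, but that is not the comparison needed: the distance to be controlled is between $X$ and $\mu_n$, which your argument never addresses.
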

By definition, for a DPP with kernel $K$ we have $D(x,y)=-|K(x,y)|^2$ where $D$ is introduced in \eqref{defD}.  
Hence, using the last theorem and Proposition~\ref{sidePP} we get the following strong mixing coefficients of a DPP, where we define
\begin{equation}\label{defomega}\omega(r)\defeq\sup_{|x-y|\geq r}|K(x,y)|.\end{equation}
\begin{corr} \label{sideDPP}
Let $X$ be a DPP with kernel $K$ satisfying $\mathcal H$. Then, for all $p,q>0$,
\begin{equation} \label{alphaDPP}
\left \{ \begin{tabular}{l} $\alpha_{p,q}(r) \leq \underset{\substack{|A|<p,|B|<q \\ \rm{dist}(A,B)>r}}{\sup}\int_{A\times B} |K(x,y)|^2 \leq pq\omega(r)^2$, \\  $\alpha_{p,\infty}(r)\leq p\area_d\int_r^{\infty} \omega^2(t)t^{d-1}\der t$. \end{tabular} \right .
\end{equation}
\end{corr}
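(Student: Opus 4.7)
The plan is to combine the NA property of DPPs (just stated) with Proposition~\ref{sidePP}, exploiting the explicit form of $D$ for DPPs. The preliminary observation is that, from the definition of the intensity functions of a DPP, $\rho_1(x)=K(x,x)$ and $\rho_2(x,y)=K(x,x)K(y,y)-|K(x,y)|^2$, so that by \eqref{defD}
\[ D(x,y) = -|K(x,y)|^2. \]

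For the bound on $\alpha_{p,\infty}(r)$, I would simply substitute $\sup_{|x-y|=t}|D(x,y)| = \sup_{|x-y|=t}|K(x,y)|^2 \leq \omega(t)^2$ into the second line of \eqref{alpha} in Proposition~\ref{sidePP}; this yields the stated inequality immediately.

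For $\alpha_{p,q}(r)$, the first line of \eqref{alpha} would already deliver the coarse bound $pq\omega(r)^2$, but to obtain the sharper integral form stated here I would redo the short proof of Proposition~\ref{sidePP}, keeping the integral intact. Namely, starting from the representation
\[ \alpha(\F(A),\F(B)) = \sup_{\mathcal{A}\in\F(A),\,\mathcal{B}\in\F(B)} \cov(\cara{\mathcal{A}}(X\cap A),\cara{\mathcal{B}}(X\cap B)), \]
Theorem~\ref{NAmain} applied with $\|\cara{\mathcal{A}}\|_A,\|\cara{\mathcal{B}}\|_B\leq 1$ gives $\alpha(\F(A),\F(B)) \leq |\cov(N(A),N(B))|$. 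Combined with \eqref{eq:cov} and the sign information $D\leq 0$, this identifies the right-hand side with $\int_{A\times B}|K(x,y)|^2\,\der x\,\der y$. Taking the supremum over admissible $A,B$ then yields the refined bound, and dominating $|K(x,y)|^2$ by $\omega(r)^2$ on the region $\{(x,y):|x-y|>r\}$ reproduces the coarser $pq\omega(r)^2$.

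No real obstacle is expected; the argument is essentially a substitution once the NA property of DPPs and the identity $D=-|K|^2$ are in hand. The only mild care is to keep the first bound in integral form rather than passing immediately to a supremum, since the finer version is what will matter for the sharper DPP-specific asymptotics developed in the rest of Section~\ref{sec:DPP}.
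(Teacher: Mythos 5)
Your argument is correct and is essentially the paper's own (the paper gives no explicit proof, deriving the corollary directly from the NA property of DPPs, the identity $D=-|K|^2$, and Proposition~\ref{sidePP}). You rightly note that the intermediate integral bound requires stepping back to $|\cov(N(A),N(B))|$ inside the proof of Proposition~\ref{sidePP} rather than using its stated supremum form, and that the sign $D\leq 0$ lets $|\int_{A\times B}D|$ be identified with $\int_{A\times B}|K(x,y)|^2$; this is exactly the intended reading.
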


It is worth noticing that this result, and so the covariance inequality~\eqref{NAbound}, is optimal in the sense that for a wide class of DPPs, the $\alpha$-mixing coefficient $\alpha_{p,q}(r)$ do not decay faster than $\sup_{\substack{|A|<p,|B|<q \\ \rm{dist}(A,B)>r}}|\cov(N(A),N(B))|$ when $r$ goes to infinity, as stated in the following proposition.

\begin{prop}
Let $X$ be a DPP with kernel $K$ satisfying $\mathcal H$. We further assume that $K$ is bounded, takes its values in $\R_+$ and is such that $\|\mathcal{K}\|<1$ where $\|.\|$ is the operator norm. Then, for all $p,q,r>0$,
\begin{equation}\label{optim}
(1-\|\mathcal{K}\|)^{\frac{(p+q) \|K\|_{\infty}}{\|\mathcal{K}\|}}\sup_{\substack{|A|<p,|B|<q \\ \rm{dist}(A,B)>r}}\int_{A\times B} |K(x,y)|^2\leq\alpha_{p,q}(r)\leq \sup_{\substack{|A|<p,|B|<q \\ \rm{dist}(A,B)>r}}\int_{A\times B} |K(x,y)|^2.
\end{equation}
\end{prop}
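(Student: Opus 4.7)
The upper bound is exactly the first inequality in Corollary~\ref{sideDPP}, so all the work lies in the lower bound. My plan is to fix an arbitrary admissible pair $A,B$ satisfying $|A|<p$, $|B|<q$, $\mathrm{dist}(A,B)>r$, and to exhibit events $\mathcal{A}\in\F(A)$ and $\mathcal{B}\in\F(B)$ whose covariance realizes the claimed lower bound up to the prefactor; taking $\sup$ over admissible $A,B$ then gives the statement. The natural candidates are the void events $\mathcal{A}=\{N(A)=0\}$, $\mathcal{B}=\{N(B)=0\}$, whose probabilities are given by the Fredholm determinants $\mathbb{P}(\mathcal{A})=\det(I-\mathcal{K}_A)$, $\mathbb{P}(\mathcal{B})=\det(I-\mathcal{K}_B)$, and, since $A\cap B=\emptyset$, $\mathbb{P}(\mathcal{A}\cap\mathcal{B})=\det(I-\mathcal{K}_{A\cup B})$. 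By negative association, $\Delta:=\det(I-\mathcal{K}_A)\det(I-\mathcal{K}_B)-\det(I-\mathcal{K}_{A\cup B})\geq0$, and it is this $\Delta$ that must be bounded below.

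Since $\|\mathcal{K}\|<1$ and the three operators are trace class, I take logarithms and use the convergent expansion $-\log\det(I-T)=\sum_{n\geq1}\tr(T^n)/n$ to obtain
$$\log\frac{\det(I-\mathcal{K}_A)\det(I-\mathcal{K}_B)}{\det(I-\mathcal{K}_{A\cup B})}=\sum_{n\geq2}\frac{\tr(\mathcal{K}_{A\cup B}^n)-\tr(\mathcal{K}_A^n)-\tr(\mathcal{K}_B^n)}{n}.$$
Expanding each trace as $\tr(\mathcal{K}_U^n)=\int_{U^n}K(x_1,x_2)\cdots K(x_n,x_1)\,dx$, the $n$-th bracket is the same integrand over the ``mixed'' region $(A\cup B)^n\setminus(A^n\cup B^n)$. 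The hypothesis $K\geq0$ then makes every term of the series nonnegative, and the $n=2$ contribution alone equals $\int_{A\times B}K(x,y)^2\,dx\,dy$ (using $K(x,y)=K(y,x)$). Exponentiating and invoking $e^t-1\geq t$ on $t\geq 0$ yields
$$\Delta\geq\det(I-\mathcal{K}_{A\cup B})\int_{A\times B}|K(x,y)|^2\,dx\,dy.$$

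It remains to bound $\det(I-\mathcal{K}_{A\cup B})$ from below. Since every eigenvalue $\lambda$ of $\mathcal{K}_{A\cup B}$ lies in $[0,\|\mathcal{K}\|]$, concavity of $t\mapsto\log(1-t)$ on that interval gives the chord inequality $\log(1-\lambda)\geq(\lambda/\|\mathcal{K}\|)\log(1-\|\mathcal{K}\|)$. Summing over eigenvalues and using $\tr(\mathcal{K}_{A\cup B})=\int_{A\cup B}K(x,x)\,dx\leq\|K\|_\infty(p+q)$ yields $\det(I-\mathcal{K}_{A\cup B})\geq(1-\|\mathcal{K}\|)^{(p+q)\|K\|_\infty/\|\mathcal{K}\|}$, and combining all the estimates concludes the lower bound. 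The most delicate step is the identification of the $n$-th term of the telescoping log-series as an integral of $K$'s over mixed tuples; this is precisely where the positivity of $K$ is indispensable, for without it only the $n=2$ term is visibly nonnegative and an extra argument would be needed to control the higher-order remainder.
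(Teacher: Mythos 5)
Your proof is correct and follows essentially the same route as the paper: void probabilities of $A$, $B$ and $A\cup B$, the telescoping log-determinant series whose $n$-th term is an integral over mixed tuples (nonnegative by $K\geq 0$, with the $n=2$ term producing $\int_{A\times B}|K(x,y)|^2$), the bound $e^t-1\geq t$, and a lower bound on $\det(I-\mathcal{K}_{A\cup B})$ via $\tr(\mathcal{K}_{A\cup B})\leq (p+q)\|K\|_\infty$. The only cosmetic difference is that you bound the determinant by the chord inequality for $\log(1-\lambda)$ on the spectrum, whereas the paper uses the trace power inequality $\tr(\mathcal{K}_{A\cup B}^n)\leq\|\mathcal{K}\|^{n-1}\tr(\mathcal{K}_{A\cup B})$; these are equivalent computations.
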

\begin{proof}
The upper bound for $\alpha_{p,q}(r)$ is just the one in \eqref{alphaDPP}.
The lower bound is obtained through void probabilities. Let $p,q,r>0$ and $A,B\subset\R^d$ such that $|A|<p$, $|B|<q$ and $\dist(A,B)>r$. By definition, for any such sets $A$ and $B$,  $\alpha_{p,q}(r)\geq |\mathbb{P}(N(A)=0)\mathbb{P}(N(B)=0)-\mathbb{P}(N(A\cup B)=0)|$. The void probabilities of DPPs are known (see~\cite{Shirai}) and equal to
$$\mathbb{P}(N(A)=0)=\exp\left(-\sum_{n\geq 1}\frac{\tr(\mathcal{K}_A^n)}{n}\right)$$
where $\mathcal{K}_A$ is the projection of $\mathcal{K}$ on the set of square integrable functions $f:A\rightarrow\R$. Moreover, $\mathbb{P}(N(A)=0)\mathbb{P}(N(B)=0)-\mathbb{P}(N(A\cup B)=0)\geq 0$  by negative association, and we have 
\begin{align}
&\mathbb{P}(N(A)=0)\mathbb{P}(N(B)=0)-\mathbb{P}(N(A\cup B)=0)\nonumber\\
&=\exp\left(-\sum_{n\geq 1}\frac{\tr(\mathcal{K}_{A\cup B}^n)}{n}\right)\left(\exp\left(\sum_{n\geq 1}\frac{\tr(\mathcal{K}_{A\cup B}^n)-\tr(\mathcal{K}_A^n)-\tr(\mathcal{K}_B^n)}{n}\right)-1\right)\nonumber\\
&\geq\exp\left(-\sum_{n\geq 1}\frac{\tr(\mathcal{K}_{A\cup B}^n)}{n}\right)\sum_{n\geq 1}\frac{\tr(\mathcal{K}_{A\cup B}^n)-\tr(\mathcal{K}_A^n)-\tr(\mathcal{K}_B^n)}{n}.\label{min1}
\end{align}
Using the classical trace inequality we get
$$\tr(\mathcal{K}_{A\cup B}^n)\leq\|\mathcal{K}_{A\cup B}\|^{n-1}\tr(\mathcal{K}_{A\cup B})\leq\|\mathcal{K}\|^{n-1}\int_{A\cup B}K(x,x)\der x\leq \|\mathcal{K}\|^{n-1}(p+q)\|K\|_{\infty},$$
thus
\begin{equation}\label{min2}
\exp\left(-\sum_{n\geq 1}\frac{\tr(\mathcal{K}_{A\cup B}^n)}{n}\right)\geq (1-\|\mathcal{K}\|)^{\frac{(p+q)\|K\|_{\infty}}{\|\mathcal{K}\|}}.
\end{equation}
Moreover, since $A$ and $B$ are disjoint sets, we can write
\begin{multline}\label{min3}
\tr(\mathcal{K}_{A\cup B}^n)-\tr(\mathcal{K}_A^n)-\tr(\mathcal{K}_B^n)=\int_{(A\cup B)^n} K(x_1,x_2)\cdots K(x_{n-1},x_n)K(x_n,x_1)\der x_1\cdots\der x_n
\\-\int_{A^n\cup B^n} K(x_1,x_2)\cdots K(x_{n-1},x_n)K(x_n,x_1)\der x_1\cdots\der x_n,
\end{multline}
which vanishes when $n=1$, is equal to $2\int_{A\times B} |K(x,y)|^2$ when $n=2$ and is non-negative for $n\geq 3$ since $K$ is assumed to be non-negative. Finally, by combining~(\ref{min1}),~(\ref{min2}) and~(\ref{min3}) we get the lower bound in~(\ref{optim}).
\end{proof}

\subsection{Central limit theorem for functionals of DPPs} \label{sec:CLTDPP}
We investigate the asymptotic distribution of functions that can be written as a sum over subsets of close enough points of $X$, namely
\begin{equation} \label{statform}
f(X):=\sum_{S \subset X} f_0(S),
\end{equation}
where $f_0$ is a bounded function vanishing when $\diam(S)>\tau$ for a certain fixed constant $\tau>0$. 
The typical example, encountered in asymptotic inference, concerns functions $f_0$ that are supported on sets $S$ having exactly $p$ elements, in which case \eqref{statform} often takes the form
\begin{equation}\label{p-uplet}f(X)=\frac{1}{p!}\sum_{x_1,\cdots,x_p\in X}^{\neq}f_0(x_1,\cdots,x_p),\end{equation}
where the sum is done over ordered $p$-tuples  of $X$ and the symbol $\neq$ means that we consider  distinct points. 
The asymptotic distribution of \eqref{p-uplet} has been investigated in~\cite{Sosh} when $p=1$ and in \cite{BiscioMix} for general $p$ and stationary DPPs. 

In the next theorem, we extend these settings to functionals like \eqref{statform} applied to general non stationary DPPs. Some discussion and comments are provided after its proof.
We use Minkwoski's notation and write $A\oplus r$ for the set $\bigcup_{x\in A}\mathcal{B}(x,r)$.

\begin{theo} \label{CLTDPP}
Let $X$ be a DPP associated to a kernel $K$ that satisfies $\mathcal H$ and that is further bounded. Let $\tau>0$ and $f:\Omega\rightarrow\R$ be a function of the form
$$f(X):=\sum_{S \subset X} f_0(S)$$
where $f_0$ is a bounded function vanishing when $\diam(S)>\tau$. Let $(W_n)_{n\in\N}$ be a sequence of increasing subsets of $\R^d$ such that $|W_n|\rightarrow \infty$ and let $\sigma_n^2:=\var(f(X\cap W_n))$. Assume that there exists $\epsilon>0$ and $\nu>0$ such that the following conditions are satisfied:
\begin{enumerate}[label=(H\arabic*)]
\item $|\partial W_n\oplus(\tau+\nu)|=o(\vol{W_n})$;\label{hyp2.2}
\item $\omega(r)=o(r^{-(d+\epsilon)/2})$; \label{hyp2.3}
\item $\liminf_{n}\vol{W_n}^{-1}\sigma_n^2>0$. \label{hyp2.4}
\end{enumerate}
Then,
$$\frac{1}{\sigma_n}(f(X\cap W_n)-\E[f(X\cap W_n)])\cvlaw\mathcal{N}(0,1).$$
\end{theo}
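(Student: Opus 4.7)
The strategy is to reduce Theorem~\ref{CLTDPP} to the general CLT~\ref{CLT} through a block decomposition of $W_n$ that respects the interaction range $\tau$ of $f_0$. Fix $R>0$ with $R\sqrt d<\nu$, set $s:=R+2\tau$, and let $\tilde C_i$ (resp.\ $C_i$) denote the cube of side $R$ (resp.\ $s$) centered at $Ri$: the cubes $\tilde C_i$ partition $\R^d$ and $\tilde C_i\oplus\tau\subset C_i$. Assign to every finite $S\subset\R^d$ with $\diam S\leq\tau$ a canonical reference point $\mathrm{ref}(S)$ (e.g.\ the lexicographic minimum); then $\mathrm{ref}(S)\in\tilde C_i$ forces $S\subset C_i$, so that
$$f_i(X):=\sum_{\substack{S\subset X\\ \mathrm{ref}(S)\in\tilde C_i,\;\diam S\leq\tau}}f_0(S)$$
depends only on $X\cap C_i$. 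Set $Y_i:=f_i(X\cap C_i)-\E[f_i(X\cap C_i)]$, $I_n:=\{i\in\Z^d:C_i\subset W_n\}$ and $S_n':=\sum_{i\in I_n}Y_i$.

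The centered sum $Z_n:=f(X\cap W_n)-\E[f(X\cap W_n)]$ and $S_n'$ agree on all contributions from subsets with $\mathrm{ref}(S)\in\bigcup_{i\in I_n}\tilde C_i$, so $Z_n-S_n'$ is a sum of block functionals attached to indices $j$ such that $\tilde C_j$ meets $\partial W_n\oplus(\tau+R\sqrt d)\subset\partial W_n\oplus(\tau+\nu)$. The number $b_n$ of such indices is $O(|\partial W_n\oplus(\tau+\nu)|/R^d)=\petito(|W_n|)$ by~\ref{hyp2.2}. Combining the uniform second moment bound of the next paragraph with Theorem~\ref{NAmain} and Corollary~\ref{mainDPP} (the covariance of two blocks at lattice distance $r$ is controlled by $\omega$ evaluated at a distance of order $r$ for the well-separated pairs, and by a uniform constant for the $O(1)$ close neighbors), a standard variance-of-a-sum estimate yields $\var(Z_n-S_n')=O(b_n)=\petito(|W_n|)=\petito(\sigma_n^2)$ by~\ref{hyp2.4}. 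In particular $\var(S_n')/\sigma_n^2\to 1$, so it suffices to establish a CLT for $S_n'/\sqrt{\var(S_n')}$.

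I now verify the four assumptions of Theorem~\ref{CLT} for $(Y_i)$. Condition~\ref{hyp0} is the cited fact that any DPP satisfying~$\mathcal H$ is NA. For~\ref{hyp1}, the crude bound $|f_i(X)|\leq\|f_0\|_\infty\,2^{N(C_i)}$ combined with the representation of $N(C_i)$ as a sum of independent Bernoullis with parameters in $[0,1]$ summing to $\int_{C_i}K(x,x)\der x\leq\|K\|_\infty s^d$ gives $\E[e^{tN(C_i)}]\leq\exp((e^t-1)\|K\|_\infty s^d)$ uniformly in $i$, so every $\|Y_i\|_{2+\delta}$ is uniformly bounded ($\delta$ may be chosen arbitrarily large). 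Since $D(x,y)=-|K(x,y)|^2$, one has $\sup_{|x-y|\geq r}|D(x,y)|\leq\omega(r)^2=\petito(r^{-(d+\epsilon)})$ by~\ref{hyp2.3}; choosing $\delta$ large enough that $(d+\epsilon')(2+\delta)/\delta<d+\epsilon$ for some $\epsilon'>0$ yields~\ref{hyp2}. Finally, \ref{hyp3} holds because $|I_n|\asymp R^{-d}|W_n|$ and $\var(S_n')\sim\sigma_n^2\gtrsim|W_n|$ by~\ref{hyp2.4} and the previous paragraph. Theorem~\ref{CLT} then delivers $S_n'/\sqrt{\var(S_n')}\cvlaw\mathcal N(0,1)$, and Slutsky's lemma together with $Z_n-S_n'=\petito_{L^2}(\sigma_n)$ concludes.

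The main obstacle will be the boundary estimate of the second paragraph: one must show that the blocks clustered near $\partial W_n$ contribute $\petito(\sigma_n^2)$, not merely $O(\sigma_n^2)$, which requires combining the volume bound~\ref{hyp2.2} with the summability of the $(Y_j)$-covariances provided by Corollary~\ref{mainDPP} under~\ref{hyp2.3}. The verification of~\ref{hyp1} is where the DPP structure enters substantively: the Bernoulli-sum representation of point counts is what allows arbitrarily large moments of $Y_i$, thereby bridging the gap between the mild decay assumption~\ref{hyp2.3} ($\omega(r)=\petito(r^{-(d+\epsilon)/2})$) and the stronger decay demanded by Theorem~\ref{CLT}.
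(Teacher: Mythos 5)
Your proposal follows the same overall strategy as the paper: replace $f(X\cap W_n)$ by a lattice sum of block functionals $Y_i$, each measurable with respect to an enlarged cube, show the boundary discrepancy is negligible in variance, and invoke Theorem~\ref{CLT}. The differences are mostly cosmetic or equivalent: you assign each subset $S$ to a cube via a lexicographic reference point where the paper uses the barycentre (both force $S\subset C_i$ when $\diam S\leq\tau$), and you verify \ref{hyp1} via the Bernoulli-sum representation of $N(C_i)$ where the paper computes $\E[2^{nN(A)}]$ directly from the intensity functions (Lemma~\ref{bornenul}); both give arbitrarily large uniform moments, which is indeed what lets the mild decay \ref{hyp2.3} imply \ref{hyp2} for large $\delta$.

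The one genuine issue is your treatment of the boundary remainder $Z_n-S_n'$. You propose to bound $\var(Z_n-S_n')=\sum_{j,k}\cov(\check Y_j,\check Y_k)$ by applying Theorem~\ref{NAmain} and Corollary~\ref{mainDPP} to the boundary blocks, but those results require the functionals to be bounded and the bound is expressed through the seminorms $\|\cdot\|_{C_j}$: for a block functional of the form $\sum_{S}f_0(S)$, adding one point $x$ changes the value by $\sum_{S\ni x}f_0(S)$, which is of order $\|f_0\|_\infty 2^{N(C_j)}$, so $\|\check Y_j\|_{C_j}=\infty$ and the inequality \eqref{NAbound} gives nothing. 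The paper avoids this by proving a dedicated variance bound (Lemma~\ref{ugly}) through a direct expansion in the intensity functions, controlling $\rho_{2p}(x,y)-\rho_p(x)\rho_p(y)$ via a determinant inequality. Your block decomposition of the remainder can be salvaged, and is arguably lighter than Lemma~\ref{ugly}, but the correct tool is the Davydov covariance inequality \eqref{covalpha} with exponents $(2+\delta,2+\delta,(2+\delta)/\delta)$, combined with the mixing bound $\alpha_{s^d,s^d}(r)\leq s^{2d}\omega(r)^2$ from Corollary~\ref{sideDPP} and your uniform $(2+\delta)$-moment bounds; for $\delta$ large enough the row sums $\sum_k|\cov(\check Y_j,\check Y_k)|$ are then uniformly bounded and $\var(Z_n-S_n')=O(b_n)=\petito(\sigma_n^2)$ as you claim. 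With that substitution your argument is complete.
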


\begin{proof}
In order to apply Theorem~\ref{CLT}, we would like to rewrite $f$ as a sum over  cubes of a lattice.  Unfortunately,  for disjoint sets $A,B\subset \R^d$, $f(X\cap A)+f(X\cap B)\neq f(X\cap (A\cup B))$ in general. Instead, we apply Theorem~\ref{CLT} to an auxiliary function, close to $f$,  as follows.
Define $S^0$ as the barycentre of the set $S$.  We write
\begin{equation} \label{statform2}
f_W(X)=\sum_{S\subset X} f_0(S)\cara{W}(S^0)
\end{equation}
for the sum over the subsets of points of $X$ with barycentre in $W\subset\R^d$.
Now, we split $\R^d$ into little cubes the following way. Let $C_0$ be a given $d$-dimensional cube with a given side-length $0<s\leq \nu/\sqrt{d}$. For all $i\in\Z^d$, let $C_i$ be the translation of $C_0$ by the vector $s\cdot i$. Let $I_n\defeq\{i: C_i\oplus\tau\subset W_n\}$ and $\widetilde{W}_n=\bigcup_{i\in I_n}C_i$. An illustration of these definitions is provided in Figure~\ref{dessin}. Since $f_{\tilde{W}_n}(X)=\sum_{i\in I_n}f_{C_i}(X)$ and each $f_{C_i}$ are $\F(C_i\oplus \tau)$-measurable then $f_{\tilde{W}_n}$ is the ideal candidate to use Theorem~\ref{CLT} on. Thus, we first prove that the difference between $f_{\tilde{W}_n}$ and $f(X\cap W_n)$ is asymptotically negligible and then that $f_{\tilde{W}_n}$ satisfies the conditions of Theorem~\ref{CLT}.

\begin{figure}[h]
   \includegraphics[width=7cm]{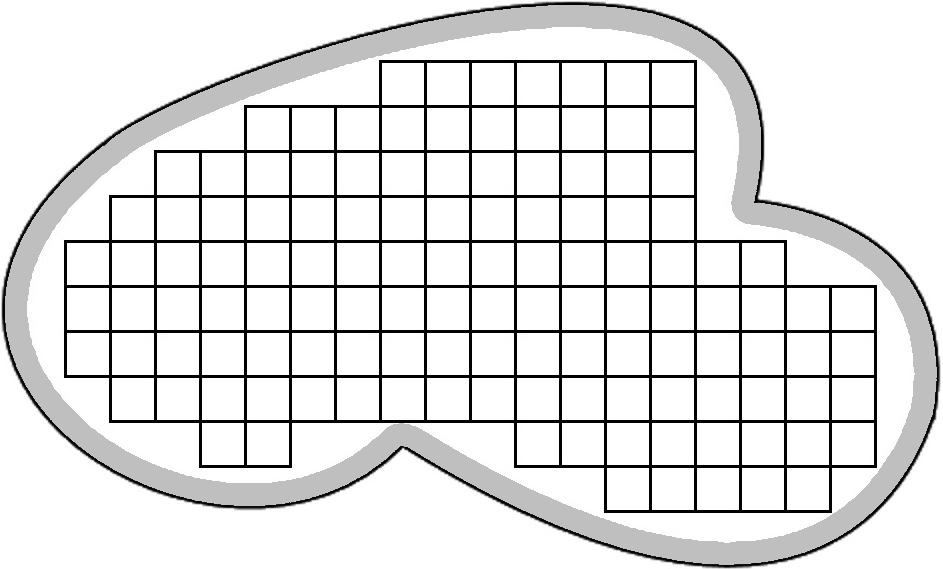}
	   \caption{\label{dessin} Example of illustration of the definition of $\tilde{W}_n$. Here, the black border is $\partial W_n$, the grey area corresponds to $(\partial W_n\oplus\tau)\cap W_n$ and the square lattice corresponds to $\tilde{W}_n$.}
\end{figure}

First of all, notice that $\dist(C_i,\partial W_n)\geq\tau$ for all $i\in I_n$. 
Therefore, for any point in $W_n$ at a distance greater than $\tau+s\sqrt{d}$ from $\partial W_n$, the cube $C_i$  of side-length $s$ containing it is at a distance at least $\tau$ from $\partial W_n$, hence it is one of the $C_i$ in $\tilde{W}_n$ and we get
$$\vol{W_n\backslash\tilde{W}_n}\leq\vol{\partial W_n\oplus (\tau+s\sqrt{d})}.$$
Hence, by Assumption~\ref{hyp2.2}, $\vol{W_n}\sim\vol{\tilde{W}_n}$. Now,
\begin{equation} \label{Diff}
f(X\cap W_n)-f_{\tilde{W}_n}(X)=\sum_{S\subset X\cap W_n} f_0(S)\cara{W_n\backslash\tilde{W}_n}(S^0).
\end{equation}
Since $f_0$ vanishes when two points of $S$ are at distance further than $\tau$, then the sum in~(\ref{Diff}) only concerns the subsets $S$ of $X\cap((W_n\backslash\tilde{W}_n)\oplus \tau)\cap W_n)$. By Lemma~\ref{ugly}, the variance of $f(X\cap W_n)-f_{\tilde{W}_n}(X)$ is then a $O(\vol{(W_n\backslash\tilde{W}_n)\oplus \tau})$,  whence a $o(\vol{W_n})$ and finally a $o(\sigma_n^2)$ by Assumption~\ref{hyp2.4}. Therefore, $\sigma_n^{-1}(f(X\cap W_n)-\E[f(X\cap W_n)])$ has the same limiting distribution as $\sigma_n^{-1}(f_{\tilde{W}_n}(X)-\E[f_{\tilde{W}_n}(X)])$. Moreover, we have
$$|\cov(f_{\tilde{W}_n}(X),f(X\cap W_n)-f_{\tilde{W}_n}(X))|\leq\sigma_n\sqrt{\var(f(X\cap W_n)-f_{\tilde{W}_n}(X))}=\sigma_n o(\sqrt{|W_n|})=o(\sigma_n^2)$$
by Assumptions~\ref{hyp2.2},~\ref{hyp2.4} and Lemma~\ref{ugly} proving that $\sigma_n^{-1}(f(X\cap W_n)-\E[f(X\cap W_n)])$ has the same limiting distribution as $\var(f_{\tilde{W}_n}(X))^{-1/2}(f_{\tilde{W}_n}(X)-\E[f_{\tilde{W}_n}(X)])$.

We conclude by showing that the random variables $Y_i=f_{C_i}(X)-\E[f_{C_i}(X)]$ satisfy the assumptions of Theorem~\ref{CLT}. A rough bound on $f$ gives us $|f_{C_i}(X)|\leq \|f_0\|_{\infty} 2^{N(C_i\oplus \tau)}$ so, by Lemma~\ref{bornenul}, 
$$\forall n\in\N,~~\sup_{i\in\Z^d}\E[|Y_i|^n]<\infty.$$
This means that the $Y_i$'s satisfy Assumption~\ref{hyp1} for all $\delta>0$ and thus~\ref{hyp2} as a consequence of~\ref{hyp2.3}. Finally, since $|I_n|=s^{-d}\vol{\tilde{W}_n}=O(\vol{W_n})$ and $\var(f_{\tilde{W}_n}(X))\sim\sigma_n^2$, we have 
$$\liminf_{n}|I_n|^{-1}\var(f_{\tilde{W}_n}(X))>0$$
by Assumption~\ref{hyp2.4}, which concludes the proof of the theorem.
\end{proof}

We highlight some  extensions of this result.
\begin{itemize}
\item[i)] Since the superposition of independent PA (respectively NA) point processes remains a PA (respectively NA) point process, then Theorem~\ref{CLTDPP} holds true for $\alpha$-determinantal point processes where $\alpha\in\{-1/m: m\in\N^*\}$, see~\cite{Shirai} for more information about $\alpha$-DPPs.
\item [ii)]Theorem~\ref{CLTDPP} also extends to $\R^q$-valued functions $f$ where $q\geq 2$. Let $\Sigma_n:=\var(f(X\cap W_n))$. If we replace \ref{hyp2.4} by 
$$\liminf_{n}\vol{W_n}^{-1}\lambda_{\min}(\Sigma_n)>0$$
where $\lambda_{\min}(\Sigma_n)$ denotes the smallest eigenvalue of $\Sigma_n$, then Theorem~\ref{CLTDPP} holds true with the conclusion
$$\Sigma_n^{-1/2}(f(X\cap W_n)-\E[f(X\cap W_n)])\cvlaw\mathcal{N}(0, Id_q)$$
where $Id_q$ is the $q\times q$ identity matrix. Since $\Sigma_n$ does not necessary converge, this result is not a direct application of the Cram\'er-Wold device. Instead, a detailed proof is given in \cite{CLT}.
\item [iii)]In \eqref{statform}, $f_0$ only depends on finite subsets of $\R^d$ and not on the order of the points in each subset. Nonetheless, we can easily extend \eqref{statform} to functions of the form
$$f(X)=\sum_{n\geq 0}\frac{1}{n!}\sum_{x_1,\cdots,x_n\in X}^{\neq} f_0(x_1,\cdots,x_n)$$
where $f_0$ is a bounded function on $\bigcup_{n\geq 0} (\R^d)^n$ that vanishes when two of its coordinates are at a distance greater than $\tau$. Then $f$ still satisfy Theorem \ref{CLTDPP}. This is because we can write
$$f(X)=\sum_{S\subset X}f^{sym}_0(S)$$
where $f^{sym}_0$ is the symmetrization of $f_0$ defined by
$$f^{sym}_0(\{x_1,\cdots,x_n\})\defeq\frac{1}{n!}\sum_{\sigma\in S_n}f_0(x_{\sigma(1)},\cdots,x_{\sigma(n)})$$
where $S_n$ is the symmetric group on $\{1,\cdots,n\}$. Since $f^{sym}_0$ is also bounded and vanishes when $\diam(S)>\tau$ then it satisfies the required assumptions for Theorem \ref{CLTDPP}.
\end{itemize}

Let us comment the assumptions of Theorem \ref{CLTDPP}.
\begin{itemize}
\item Condition \ref{hyp2.2} makes clear the idea that $W_n$ must grow to $\R^d$ as $n\to\infty$, without  being a too irregular set. In the simple case where  $W_n$ is the Cartesian product of intervals, i.e. $W_n=\Delta_n^{(1)}\times\cdots\times \Delta_n^{(d)}$, then \ref{hyp2.2}  is equivalent to $|\Delta_n^{(k)}|\rightarrow \infty$ for all $k$.

\item Condition~\ref{hyp2.3} is not really restrictive and is satisfied by all  classical kernel families. For example, the kernels of the Ginibre ensemble and of the Gaussian unitary ensemble (see~\cite{Hough}) have an exponential decay. Moreover, all translation-invariant kernels used in spatial statistics (see~\cite{Lavancier} and \cite{Biscio}) satisfy $\omega(r)=O(r^{-(d+1)/2})$: the Gaussian and the Laguerre-Gaussian covariance functions have an exponential decay; the Whittle-Matérn and the Cauchy covariance functions satisfy $\omega(r)=o(r^{-d})$; and in the case of the most repulsive DPP in dimension $d$ (as determined in~\cite{Lavancier,Biscio}), which is the slowest decaying Bessel-type kernel,  its kernel is given by
$$K(x,y)=\frac{\sqrt{\rho \Gamma(\frac{d}{2}+1)}}{\pi^{\frac{d}{4}}}\frac{J_{\frac{d}{2}}(2\sqrt{\pi}\Gamma(\frac{d}{2}+1)^\frac{1}{d}\rho^{\frac{1}{d}}||y-x||)}{||y-x||^{\frac{d}{2}}} \Rightarrow \omega(r)=O\left (r^{-\frac{d+1}{2}}\right ),$$
where $\rho>0$ is a constant.  While this DPP satisfies Condition~\ref{hyp2.3}, we point out that its $\alpha$-mixing coefficients decay too slowly to be able to derive a CLT only from them, see the discussion before Theorem~\ref{CLT}. This justifies the 
importance of Condition~\ref{hyp2} in this theorem, obtained by the NA property, and which leads to Condition~\ref{hyp2.3}.

\item Condition~\ref{hyp2.4} is harder to control in the broad setting of Theorem~\ref{CLTDPP}, but we can get sufficient conditions in some particular cases. For example, if $f_0(S)=\cara{|S|=1}$ and $K$ is a translation-invariant continuous kernel then it was shown in~\cite{Sosh} that Condition~\ref{hyp2.4} holds when $K$ is not the Fourier transform of an indicator function. In the peculiar case where $K$ is the Fourier transform of an indicator function, \cite{Sosh} proved that the limiting distribution is still Gaussian but the rate of convergence is different. As another example extending the previous one, assume that  $f_0$ is a non-negative function supported on the set $\{S\subset X: |S|=p\}$ for a given integer $p>0$ and assume that the highest eigenvalue of the integral operator $\mathcal{K}$ associated to $K$ is less than 1. Then, we show in Proposition~\ref{lowerbound} that
$$\liminf_n \frac{1}{|W_n|}\int_{W_n^p}f_0(x)\det(K[x])\der x>0$$
implies~\ref{hyp2.4} and is much easier to verify.
\end{itemize}

\subsection{Application to the two-step estimation of an inhomogeneous DPP} \label{waag}

In this section, we consider DPPs on $\R^2$ with kernel of the form
\begin{equation} \label{Kform}
K_{\beta,\psi}(x,y)=\sqrt{\rho_\beta(x)}C_\psi(y-x)\sqrt{\rho_\beta(y)},\quad\forall x,y\in\R^2,
\end{equation}
where $\beta\in\R^p$ and $\psi\in\R^q$ are two parameters, $C_\psi$ is a correlation function and $\rho_\beta$ is of the form $\rho_\beta(x)=\rho(z(x)\beta^T)$ where $\rho$ is a known positive strictly increasing function and $z$ is a $p$-variate bounded function called covariates. 
This form implies that the first order intensity, corresponding to $\rho_\beta(x)$,  is inhomogeneous and depends on the covariates $z(x)$ through the parameter $\beta$. But all higher order intensity functions once normalized, i.e. $\rho^{(n)}(x_1,\dots,x_n)/(\rho_\beta(x_1)\dots\rho_\beta(x_n))$, are translation-invariant for $n\geq 2$. In particular, the pair correlation (the case $n=2$) is invariant by translation.  This kind of inhomogeneity is sometimes named second-order intensity reweighted stationarity and is frequently assumed in the spatial point process community. 

Existence of DPPs with a kernel like above is for instance ensured if $\rho_\beta(x)$ is bounded by $\rho_{\max}$ and $C_\psi$ is a continuous, square-integrable correlation function on $\R^d$ whose Fourier transform is less than $1/\rho_{\max}$, see~\cite{Lavancier}. For later use, we call $\mathcal H'$ the previous assumptions on $K_{\beta,\psi}$.

Consider the observation of a DPP $X$ with kernel $K_{\beta^*,\psi^*}$, along with the covariates $z$,  within a window $W_n:=[an,bn]\times[cn,dn]$ where $b>a$ and $d>c$. Waagepetersen and Guan~\cite{waag} have proposed the following two-step estimation procedure of $(\beta^*,\psi^*)$ for  second-order intensity reweighted stationary models. First, $\hat{\beta}_n$ is obtained by solving
$$u_{n,1}(\beta):=\displaystyle\sum_{u\in X\cap W_n}\frac{\nabla\rho_\beta(u)}{\rho_\beta(u)}-\int_{W_n}\nabla\rho_\beta(u)\der u=0.$$
where $\nabla\rho_\beta$ denotes the gradient with respect to $\beta$. In the second step, $\hat{\psi}_n$ is obtained by minimizing $m_{n,\hat{\beta}_n}$ where
$$m_{n,\beta}(\psi):=\displaystyle \int_{r_l}^r \left (\left (\sum_{u,v\in X\cap W_n}\frac{\cara{\{0<|u-v|\leq t\}}}{\rho_{\beta}(u)\rho_{\beta}(v)|W_n\cap W_{n,u-v}|}\right )^c-\mathcal K_{\psi}(t)^c\right)^2\der t.$$
Here  $r_l,r$ and $c$ are user-specified non-negative constants, $W_{n,u-v}$ is $W_n$ translated by $u-v$ and $\mathcal K_{\psi}$ is the Ripley $K$-function defined by
$$\mathcal K_{\psi}(t)\defeq\int_{\|u\|\leq t}g_{\psi}(u)\der u$$
where  $g_{\psi}(u)\defeq 1-C_\psi(u)^2/C_\psi(0)^2$ is the pair correlation function of $X$. If we define 
$$u_{n,2}(\beta,\psi) := -|W_n|\frac{\partial m_{n,\beta}(\psi)}{\partial\psi},$$
then the two-step procedure amounts to solve
$$u_n(\beta,\psi):=(u_{n,1}(\beta),u_{n,2}(\beta,\psi))=0.$$

The asymptotic properties of this two-step procedure are established in~\cite{waag}, under various moments and mixing assumptions, with a view to inference for Cox processes. We state hereafter the asymptotic normality of $(\hat\beta_n,\hat\psi_n)$
in the case of DPPs with kernel of the form~\eqref{Kform}. This setting allows us to apply Theorem~\ref{CLTDPP} and  get rid of some restrictive mixing assumptions needed in~\cite{waag}.

The asymptotic covariance matrix of $(\hat\beta_n,\hat\psi_n)$ depends on two matrices defined  in~\cite[Section 3.1]{waag}, where they are denoted by $\tilde{\Sigma}_n$ and $I_n$. We do not reproduce their expression, which is hardly tractable. An assumption in~\cite{waag} ensures the asymptotic non-degeneracy of this covariance matrix and we also need this assumption in our case, see \ref{N3} below. Unfortunately, as discussed in~\cite{waag},  it is hard to check this assumption for a given model, particularly because it depends on the covariates $z$. We are confronted by the same limitation in our setting. On the other hand, the other assumptions of the following theorem are not restrictive. In particular almost all standard kernels satisfy \ref{RegC} below, see the discussion after Theorem~\ref{CLTDPP}.

\begin{theo}
Let $X$ be a DPP with kernel $K_{\beta^*,\psi^*}$ given by \eqref{Kform} and satisfying $\mathcal H'$. Let $(\hat\beta_n, \hat\psi_n)$ the two-step estimator defined above. We assume the following.
\begin{enumerate} [label=(W\arabic*)]
\item $r_l>0$ if $c<1$; otherwise $r_l\geq 0$,
\item $\rho_\beta$ and $\mathcal K_\psi$ are twice continuously differentiable as functions of $\beta$ and $\psi$,
\item $\sup_{\|x\|\geq r}C_{\psi^*}(x)=O(r^{-1-\epsilon})$, \label{RegC}
\item Condition N3 in~\cite{waag} (concerning the matrices $I_n$ and  $\tilde\Sigma_n$) is satisfied.\label{N3}\end{enumerate}
Then, there exists a sequence $\{(\hat{\beta}_n,\hat{\psi}_n): n\geq 1\}$ for which $u_n(\hat{\beta}_n,\hat{\psi}_n)=0$ with a probability tending to one and 
$$|W_n|^{1/2}[(\hat{\beta}_n,\hat{\psi}_n)-(\beta^*,\psi^*)]I_n\tilde{\Sigma}_n^{-1/2}\cvlaw\mathcal{N}(0,Id).$$
\end{theo}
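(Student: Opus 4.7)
The plan is to follow the standard estimating-equations framework of Waagepetersen and Guan~\cite{waag}: Taylor expand $u_n(\hat\beta_n,\hat\psi_n)=0$ around $(\beta^*,\psi^*)$ to obtain
$$|W_n|^{1/2}\bigl[(\hat\beta_n,\hat\psi_n)-(\beta^*,\psi^*)\bigr]=-\bigl(|W_n|^{-1}\nabla u_n(\bar\beta,\bar\psi)\bigr)^{-1}|W_n|^{-1/2}u_n(\beta^*,\psi^*)$$
for some intermediate $(\bar\beta,\bar\psi)$, and then combine a multivariate CLT for $|W_n|^{-1/2}u_n(\beta^*,\psi^*)$ with a uniform law of large numbers for $|W_n|^{-1}\nabla u_n(\beta,\psi)$ on a neighbourhood of $(\beta^*,\psi^*)$. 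The only step requiring genuine work in the DPP setting is the CLT: Theorem~\ref{CLTDPP} replaces the mixing/moment assumptions of~\cite{waag}, which would be too restrictive for some of the DPP kernels we wish to include.

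For the CLT, I would first observe that $u_{n,1}(\beta^*)$ is a centred sum of the form~\eqref{p-uplet} with $p=1$ and $f_0(x)=(\nabla\rho_{\beta^*}(x)/\rho_{\beta^*}(x))\mathbbm{1}_{W_n}(x)$, the centering following from Campbell's formula applied to $\rho_{\beta^*}$. Differentiating $m_{n,\beta}(\psi)$ under the integral sign expresses $u_{n,2}(\beta^*,\psi^*)$ as a finite combination of sums of the form~\eqref{p-uplet} with $p=2$ and $\tau=r$, each with bounded integrands. Both blocks thus fit the template of Theorem~\ref{CLTDPP}, and by extensions (ii)--(iii) following that theorem it is enough to verify its three hypotheses for arbitrary Cram\'er-Wold linear combinations. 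Since $W_n=[an,bn]\times[cn,dn]$ is a rectangle of side of order $n$, one has $|\partial W_n\oplus(\tau+\nu)|=O(n)=o(n^2)=o(|W_n|)$, so~\ref{hyp2.2} holds. The pointwise bound $|K_{\beta^*,\psi^*}(x,y)|\leq\rho_{\max}|C_{\psi^*}(y-x)|$ combined with~\ref{RegC} yields $\omega(r)=O(r^{-1-\epsilon})$, so~\ref{hyp2.3} holds in dimension $d=2$. Condition~\ref{hyp2.4}, applied to each Cram\'er-Wold combination, is precisely the non-degeneracy statement~\ref{N3}.

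For the Jacobian, each entry of $|W_n|^{-1}\nabla u_n(\beta,\psi)$ is a deterministic integral plus a normalized sum over points or pairs of points of $X$ of an integrand depending smoothly on $(\beta,\psi)$. By~\eqref{eq:cov} and the uniform boundedness of the pair correlation of $X$, the variance of the stochastic part is of order $|W_n|$, hence Chebyshev yields pointwise convergence in probability to a deterministic limit on any compact neighbourhood of $(\beta^*,\psi^*)$. The smoothness assumption on $\rho_\beta$ and $\mathcal K_\psi$ upgrades this to uniform convergence by a standard equicontinuity argument, and the limiting Jacobian at $(\beta^*,\psi^*)$ is, up to normalisation, the matrix $I_n/|W_n|$ whose non-degeneracy is part of~\ref{N3}. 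Existence of a consistent root $(\hat\beta_n,\hat\psi_n)$ then follows from the classical implicit-function/root argument invoked in Lemma~1 of~\cite{waag}, and the conclusion is obtained by combining the CLT, the uniform LLN, and Slutsky's lemma.

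The main obstacle I anticipate is the bookkeeping around $u_{n,2}$: after the two successive differentiations of $m_{n,\beta}(\psi)$ in $(\beta,\psi)$, one must rewrite each resulting term as a sum of the form~\eqref{p-uplet} with a bounded, smoothly parametrised integrand, so that Theorem~\ref{CLTDPP} really applies to the entire vector simultaneously via Cram\'er-Wold. The uniform moment bounds required along the way are available because the kernel is bounded and the DPP is NA, which forces exponential tails for $N(A)$ on bounded sets, and because $\rho_\beta$ remains bounded above and below on parameter compacta thanks to the form $\rho_\beta(x)=\rho(z(x)\beta^T)$ with $z$ bounded and $\rho$ strictly positive and increasing.
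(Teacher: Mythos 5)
Your proposal is correct in its essential mathematical content and isolates exactly the same key point as the paper: the only ingredient of the Waagepetersen--Guan analysis that genuinely fails for DPPs is the $\alpha_{a,\infty}$-mixing rate, and it is only needed to obtain asymptotic normality of $u_n(\beta^*,\psi^*)$, which Theorem~\ref{CLTDPP} supplies instead (with \ref{hyp2.2} from the rectangular window, \ref{hyp2.3} from (W3) via $|K_{\beta^*,\psi^*}(x,y)|\leq\rho_{\max}|C_{\psi^*}(y-x)|$, and \ref{hyp2.4} from (N3)). Where you diverge is in the architecture: the paper does not re-derive the estimating-equation machinery at all. It simply invokes Theorem~1 of \cite{waag} and verifies its three hypotheses --- (a) boundedness of $\rho_2,\rho_3$ together with uniform integrability of $\rho_3(0,v,v+u_1)-\rho_1(0)\rho_2(0,u_1)$ and $\rho_4(0,u_1,v,v+u_2)-\rho_2(0,u_1)\rho_2(0,u_2)$, which it deduces explicitly from (W3) using the determinantal form of the intensities; (b) $\|\rho_{4+2\delta}\|_\infty<\infty$, automatic for a bounded kernel; (c) the mixing rate, replaced as above. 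Your route, by contrast, reconstructs the Taylor expansion, the uniform law of large numbers for the Jacobian, and the root-existence argument from scratch. That buys transparency but costs you the bookkeeping you yourself flag: for $c\neq 1$ the statistic $u_{n,2}$ is not literally a finite combination of sums of the form \eqref{p-uplet} --- it involves the power $\hat{\mathcal K}(t)^c$ of a pair sum, and an additional delta-method linearisation (carried out inside Lemma~5 of \cite{waag}) is needed before Theorem~\ref{CLTDPP} applies; and the variance control of the pair-sum terms in the Jacobian requires precisely the integrated third- and fourth-order cumulant bounds of item (a), not just boundedness of the pair correlation as you state. Both of these are repaired by (W3) exactly as in the paper's verification, so nothing breaks; the paper's proof is simply the more economical packaging of the same idea.
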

\begin{proof}
Let $\rho_k$ be the $k$th intensity function of the DPP with kernel $(x,y)\mapsto C_{\psi^*}(y-x)$. In order to apply Theorem 1 in~\cite{waag} we need to show that
\begin{enumerate}
\item $\rho_2,\rho_3$ are bounded and there is a constant $M$ such that for all $u_1,u_2\in \R^2$, $\int|\rho_3(0,v,v+u_1)-\rho_1(0)\rho_2(0,u_1)|\der v<M$ and $\int|\rho_4(0,u_1,v,v+u_2)-\rho_2(0,u_1)\rho_2(0,u_2)|\der v<M$, \label{hyp3.1}
\item $\|\rho_{4+2\delta}\|_{\infty}<\infty$ for some $\delta>0$, \label{hyp3.2}
\item $\alpha_{a,\infty}(r)=O(r^{-d})$ for some $a>8r^2$ and $d>2(2+\delta)/\delta$. \label{hyp3.3}
\end{enumerate}
The first property (\ref{hyp3.1}) is a consequence of~\ref{RegC}. This is because we can write
$$|\rho_3(0,v,v+u_1)-\rho_1(0)\rho_2(0,u_1)|=|2C_{\psi^*}(v)C_{\psi^*}(u_1)C_{\psi^*}(v+u_1)-C_{\psi^*}(0)(C_{\psi^*}(v+u_1)^2+C_{\psi^*}(v)^2)|$$
which is  bounded by $2|C_{\psi^*}(0)|(C_{\psi^*}(v+u_1)^2+C_{\psi^*}(v)^2)$ and
$$\int_{\R^2} C_{\psi^*}(v)^2\der v\leq 2\pi\int_0^{\infty}r\sup_{\|x\|=r}|C_{\psi^*}(x)|^2\der r$$
which is finite by Assumption~\ref{RegC}. The term $\rho_4(0,u_1,v,v+u_2)-\rho_2(0,u_1)\rho_2(0,u_2)$ can be treated the same way.
For a DPP, (\ref{hyp3.2}) is satisfied for any $\delta>0$. Finally, (\ref{hyp3.3}) is the one that causes an issue since, as stated before, the $\alpha$-mixing coefficient we get in Corollary~\ref{sideDPP} decreases slower than what we desire. But, the only place  this assumption is used in~\cite{waag} is to prove the asymptotic normality of their estimator in their Lemma~5, which can also be derived as a consequence of our Theorem~\ref{CLTDPP} with Assumption~\ref{RegC}. 
\end{proof}

\appendix

\section{Proof of Theorem~\ref{trueNA}} \label{proofAss}
We use the following variant of the monotone class theorem (see~\cite[Theorem 22.1]{mono}).
\begin{theo} \label{Dyn}
Let $\mathcal{S}$ be a set of bounded functions stable by bounded monotone convergence and uniform convergence. Let $\mathcal{C}$ be a subspace of $\mathcal{S}$ such that $\mathcal{C}$ is an algebra containing the constant function $\tilde{1}$. Then, $\mathcal{S}$ contains all bounded functions measurable over $\sigma(\mathcal{C})$.
\end{theo}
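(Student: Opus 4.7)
The plan is to follow the standard functional monotone class argument: enlarge $\mathcal{C}$ to a lattice, pass from lattice operations to indicators of generating sets, then use Dynkin's $\pi$–$\lambda$ lemma to capture all of $\sigma(\mathcal{C})$, and finally conclude by uniform approximation of bounded measurable functions by simple functions. Implicitly I will use that $\mathcal{S}$ is a linear space containing the constants — this is built into the reference \cite{mono} even though the statement records only the two ``stability'' axioms explicitly, and it is needed for the $\lambda$-class step below.

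First I would replace $\mathcal{C}$ by its uniform closure $\bar{\mathcal{C}}$; since $\mathcal{S}$ is uniformly closed, $\bar{\mathcal{C}} \subset \mathcal{S}$, and $\bar{\mathcal{C}}$ is still an algebra containing $\tilde{1}$. By Weierstrass, $|x|$ is a uniform limit on $[-\|f\|_\infty,\|f\|_\infty]$ of polynomials without constant term, so $|f| \in \bar{\mathcal{C}}$ whenever $f \in \bar{\mathcal{C}}$; hence $\max(f,g) = \tfrac{1}{2}(f+g+|f-g|)$ and $\min(f,g) = \tfrac{1}{2}(f+g-|f-g|)$ also lie in $\bar{\mathcal{C}}$, which makes it a lattice.

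Next, for $f \in \bar{\mathcal{C}}$ and $c \in \R$, the sequence $g_n \defeq \min(n(f - c\tilde{1})^+,\, \tilde{1})$ lies in $\bar{\mathcal{C}} \subset \mathcal{S}$ and increases boundedly to $\cara{\{f > c\}}$, so stability under bounded monotone convergence yields $\cara{\{f > c\}} \in \mathcal{S}$. Because $\{f > c\} \cap \{g > d\} = \{\min(f - c\tilde{1},\, g - d\tilde{1}) > 0\}$, the family of these level sets forms a $\pi$-system $\mathcal{P}$ generating $\sigma(\mathcal{C})$. The collection $\mathcal{A} \defeq \{A : \cara{A} \in \mathcal{S}\}$ is then a $\lambda$-class: linearity of $\mathcal{S}$ gives stability under proper differences via $\cara{B \setminus A} = \cara{B} - \cara{A}$ for $A \subset B$, and bounded monotone convergence gives stability under increasing unions. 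Dynkin's $\pi$–$\lambda$ theorem then gives $\mathcal{A} \supset \sigma(\mathcal{P}) = \sigma(\mathcal{C})$.

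Finally, any bounded $\sigma(\mathcal{C})$-measurable function $f$ is the uniform limit of simple functions $\sum_k c_k \cara{A_k}$ with $A_k \in \sigma(\mathcal{C})$; these simple functions lie in $\mathcal{S}$ by linearity combined with $\mathcal{A} \supset \sigma(\mathcal{C})$, and their uniform limit lies in $\mathcal{S}$ by uniform closure. The only real obstacle is bookkeeping rather than conceptual: one must check that $\mathcal{A}$ truly satisfies the $\lambda$-class axioms, which is precisely where the implicit linearity (and containment of constants) in the hypothesis on $\mathcal{S}$ is used.
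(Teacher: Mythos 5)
The paper does not actually prove Theorem~\ref{Dyn}: it is imported from \cite[Theorem 22.1]{mono}, so there is no in-paper argument to compare yours against. Your proof is the standard functional monotone class argument (pass to the uniform closure of the algebra, use Weierstrass to make it a lattice, use $\min(n(f-c\tilde{1})^+,\tilde{1})\uparrow\cara{\{f>c\}}$ to reach indicators of a generating $\pi$-system, apply Dynkin's $\pi$--$\lambda$ theorem, then approximate uniformly by simple functions), and each step is sound. The one substantive point is the one you already flag: as literally stated, the theorem only assumes $\mathcal{S}$ is a \emph{set} stable under the two modes of convergence, whereas your proof --- like the cited reference --- needs $\mathcal{S}$ to be a vector space containing the constants, both for the proper-difference step $\cara{B\setminus A}=\cara{B}-\cara{A}$ in the $\lambda$-system and for placing the simple functions $\sum_k c_k\cara{A_k}$ in $\mathcal{S}$. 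That hypothesis is part of the standard statement being cited, and you are right to make its use explicit; without it the conclusion does not follow from the assumptions as written.
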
 
Now, let $A,B_1,\cdots,B_k$ be pairwise distinct Borel subsets of $\R^d$ and $g:\N^k\mapsto\R$ be a coordinate-wise increasing function. We denote by $\Omega_A$ the set of locally finite point configurations in $A$ and we define $\mathcal{S}$ as the set of functions $f:\Omega_A\mapsto\R$ such that
\begin{equation}\label{www}
\E[\hat{f}(X\cap A)g(N(B_1),\cdots,N(B_k))]\leq\E[\hat{f}(X\cap A)]\E[g(N(B_1),\cdots,N(B_k)],
\end{equation}
where $\hat{f}(X)\defeq\sup_{Y\leq X}f(Y)$.
Note that $\hat{f}$ is an increasing function and that $f$ is increasing iff $\hat{f}=f$. Our goal is to prove that $\mathcal{S}$ contains all bounded functions supported over $A$. Because of the definition of NA point processes~(\ref{NAvec}), we know that $\mathcal{S}$ contains the set $\mathcal{C}$ of functions of the form $f(N(A_1),\cdots,N(A_k))$ where the $A_i$ are pairwise disjoints Borel subsets of $A$. In particular, since point processes over $A$ are generated by the set of random vectors $\{(N(A_1),\cdots,N(A_k)): A_i\subset A~\mbox{disjoints}, k\in\N\}$, then we only need to verify that $\mathcal{S}$ and $\mathcal{C}$ satisfy the hypothesis of Theorem~\ref{Dyn} to conclude.
\begin{itemize}
\item \underline{Stability of $\mathcal{S}$ by bounded monotonic convergence:}  Since~(\ref{www}) is invariant if we add a constant to $f$ and $f$ is bounded then we can consider $f$ to be positive. Now, notice that for all functions $h$ and $k$, $h\leq k \Rightarrow \hat{h}\leq\hat{k}$ and $h\geq k \Rightarrow \hat{h}\geq\hat{k}$. So, if we take a positive bounded monotonic sequence $f_n\in\mathcal{S}$ that converges to a bounded function $f$, then $\hat{f_n}$ is also a positive bounded monotonic sequence that consequently  converges to a function $g$. Suppose that $(f_n)_n$ is an increasing sequence (the decreasing case can be treated similarly) and let us show that $g=\hat{f}$. Let $X\in\Omega_A$, for all $Y\subset X$, $f_n(Y)\leq f(Y)$. Taking the supremum then the limit gives us $g(X)\leq \hat{f}(X)$. Moreover, for all $Y\subset X$, $g(X)\geq \hat{f_n}(X)\geq f_n(Y)$. Taking the limit gives us that $g(X)\geq f(Y)$ for all $Y\subset X$ so $g(X)\geq \hat{f}(X)$ which proves that $g=\hat{f}$. Using the monotone convergence theorem we conclude that
\begin{align} \label{www2}
&\E[\hat{f_n}(X\cap A)]\rightarrow\E[\hat{f}(X\cap A)] \nonumber \\
\mbox{and} \quad&\E[\hat{f_n}(X\cap A)g(N(B_1),\cdots,N(B_k))]\rightarrow\E[\hat{f}(X\cap A)g(N(B_1),\cdots,N(B_k))],
\end{align}
which proves that $f\in\mathcal{S}$

\item \underline{Stability of $\mathcal{S}$ by uniform convergence:}  Let $f_n$ be a sequence over $\mathcal{S}$ converging uniformly to a function $f$ then, by Lemma~\ref{lipsup}, $\hat{f}_n$ also converges uniformly (and therefore in $L^1$) to $\hat{f}$. As a consequence,~(\ref{www2}) is also satisfied in this case so $f\in\mathcal{S}$.

\item \underline{$\mathcal{C}$ is an algebra:} It is easily shown that $\mathcal{C}$ is a linear space containing $\tilde{1}$ so we only need to prove that $\mathcal{C}$ is stable by multiplication. Let $A_1,\cdots,A_r$ and $A'_1\cdots,A'_s$ be two sequences of pairwise distinct Borel subsets of $A$. Let $f=f(N(A_1),\cdots,N(A_r))\in\mathcal{C}$ and $h=h(N(A'_1),\cdots,N(A'_r))\in\mathcal{C}$. We can write
$$N(A_i)=N(A_i\backslash\cup_j A'_j)+\sum_j N(A_i\cap A'_j)~~\mbox{and}~~N(A'_i)=N(A'_i\backslash\cup_j A_j)+\sum_j N(A'_i\cap A_j),$$
so $f\cdot h$ can be expressed as a function of the number of points in the subsets $A_i\backslash\cup_j A'_j$, $A'_i\backslash\cup_j A_j$ and $A_i\cap A'_j$ that are all pairwise distinct Borel subsets of $A$, proving that $\mathcal{C}$ is stable by multiplication.
\end{itemize}

This concludes the proof that $\mathcal{S}$ contains all bounded functions supported over $\Omega_A$. By doing the same exact reasoning on the set of bounded functions $g$ satisfying $\E[f(X\cap A)g(X\cap B)]\leq\E[f(X\cap A)]\E[g(X\cap B)]$ for a fixed $f$ we  obtain the same result which concludes the proof.

\section{Auxiliary results}

\begin{lem} \label{lipsup}
Let $E$ be a set and $f,g:E\rightarrow\R$ be two functions, then
$$\left |\sup_{x\in E}f(x)-\sup_{y\in E}g(y)\right | \leq \|f-g\|_{\infty}$$
\end{lem}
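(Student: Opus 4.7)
The plan is to bound the supremum of $f$ in terms of the supremum of $g$ and the uniform distance between them, then symmetrize. The only subtlety is that $\|f-g\|_\infty$ could in principle be infinite, in which case the inequality is trivial, so I will tacitly assume it is finite.

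First I would fix an arbitrary $x \in E$ and write the pointwise bound $f(x) = g(x) + (f(x)-g(x)) \leq g(x) + \|f-g\|_\infty$. Since $g(x) \leq \sup_{y \in E} g(y)$, this gives $f(x) \leq \sup_{y \in E} g(y) + \|f-g\|_\infty$. As the right-hand side no longer depends on $x$, I can take the supremum over $x$ on the left and obtain
\[
\sup_{x \in E} f(x) - \sup_{y \in E} g(y) \leq \|f-g\|_\infty.
\]

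Next, I would observe that the roles of $f$ and $g$ are symmetric and $\|f-g\|_\infty = \|g-f\|_\infty$, so swapping them yields the matching inequality
\[
\sup_{y \in E} g(y) - \sup_{x \in E} f(x) \leq \|f-g\|_\infty,
\]
and combining both bounds gives the desired absolute-value inequality. There is no real obstacle here; the only point worth being careful about is handling the edge cases (e.g.\ $E = \emptyset$, or $f,g$ unbounded) so that the manipulations with $\sup$ remain meaningful, which can be dispatched by either excluding them explicitly or noting that the stated inequality is trivial in those cases.
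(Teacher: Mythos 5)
Your proof is correct and follows essentially the same route as the paper: the pointwise bound $f(x)\leq g(x)+\|f-g\|_{\infty}\leq \sup_{y}g(y)+\|f-g\|_{\infty}$, taking the supremum over $x$, and then symmetrizing in $f$ and $g$. The extra remarks about infinite norm and the empty set are harmless but not needed for the way the lemma is used in the paper.
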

\begin{proof}
The proposition becomes trivial once we write
$$f(x)\leq g(x)+\|f-g\|_{\infty}\leq \sup_{y\in E}g(y)+\|f-g\|_{\infty}$$
Taking the supremum yields the first inequality. Moreover, by symmetry of $f$ and $g$ the second one follows similarly.
\end{proof}

\begin{lem} \label{distcarre}
Let $i,j\in\Z^d$ such that $\lun{i-j}\defeq\sum_{l=1}^d |i_l-j_l|=r$. Let $s,R>0$ and $C_i,C_j$ be the $d$-dimensional cubes with side length $s$ and respective centre $x_i=R\cdot i$ and $x_j=R\cdot j$. Then,
$$\rm{dist}(C_i,C_j)\geq \frac{1}{\sqrt{d}}(rR-sd).$$
Moreover, each cube intersects at most $(2sd/R)^d$ other cubes with centers on $R\cdot\Z^d$ and side length $s$.
\end{lem}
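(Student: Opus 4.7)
For the first bound on $\dist(C_i,C_j)$, the plan is to parametrise an arbitrary pair of points $x\in C_i$ and $y\in C_j$ as $x = x_i + u$, $y = x_j + v$, where $u,v\in\R^d$ satisfy $\|u\|_\infty,\|v\|_\infty\leq s/2$. By the reverse triangle inequality,
$$|x-y|\geq |x_i-x_j| - |u-v|.$$
The first term is $R$ times the Euclidean norm of $i-j$, and to bring in the hypothesis on $|i-j|_1$ I will invoke the elementary bound $|w|_1 \leq \sqrt{d}\,|w|_2$ (Cauchy--Schwarz applied to $w$ and the all-ones vector), giving $|x_i-x_j|\geq Rr/\sqrt{d}$. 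The second term is controlled by the complementary norm inequality $|w|_2\leq \sqrt{d}\,|w|_\infty$: indeed $|u-v|\leq |u|+|v|\leq 2\cdot\sqrt{d}\cdot s/2 = s\sqrt{d}$. Combining both bounds yields the claimed $(rR-sd)/\sqrt{d}$ and, taking the infimum over $x\in C_i$, $y\in C_j$, proves the distance inequality.

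For the second statement, the observation is that $C_i$ and $C_j$ have nonempty intersection if and only if, in each coordinate $\ell\in\{1,\dots,d\}$, the two one-dimensional intervals $[Ri_\ell-s/2,Ri_\ell+s/2]$ and $[Rj_\ell-s/2,Rj_\ell+s/2]$ overlap, which is equivalent to $|i_\ell-j_\ell|\leq s/R$. The number of integers in an interval of length $2s/R$ is at most $2\lfloor s/R\rfloor+1$, so the number of cubes intersecting $C_i$ is at most $(2\lfloor s/R\rfloor+1)^d$. A routine majoration (using $s\geq R$, under which $2s/R+1\leq 2sd/R$ for every $d\geq 1$ in the regime of interest) then gives the bound $(2sd/R)^d$ stated in the lemma; in the form used later in the paper, $(2s/R+1)^d\leq (2sd/R+1)^d$ is enough.

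The proof is essentially routine; no serious obstacle is expected. The only mild subtlety is getting both $\sqrt{d}$ factors on the correct side, which is why I prefer to write out the two norm conversions $|\cdot|_1\leq \sqrt{d}\,|\cdot|_2$ and $|\cdot|_2\leq \sqrt{d}\,|\cdot|_\infty$ explicitly rather than lump them together. For the counting part, it is useful to note that no $j\neq i$ assumption is actually needed: the bound $(2sd/R)^d$ counts all cubes of the lattice that meet $C_i$, hence in particular all \emph{other} such cubes.
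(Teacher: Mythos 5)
Your proof is correct, and the first half is essentially the paper's argument in different clothing: the paper bounds the distance from any point of a cube to its centre by $s\sqrt{d}/2$ and then minimises $\sqrt{\sum_l(Ri_l-Rj_l)^2}$ subject to $\sum_l|i_l-j_l|=r$, which is exactly your pair of norm conversions $|\cdot|_1\leq\sqrt{d}\,|\cdot|_2$ and $|\cdot|_2\leq\sqrt{d}\,|\cdot|_\infty$ combined with the reverse triangle inequality. For the counting part you genuinely diverge: the paper deduces from the first inequality that $\lun{i-j}>sd/R$ forces $C_i\cap C_j=\emptyset$ and then counts lattice points in the $\ell_1$-ball of radius $sd/R$, obtaining $(2sd/R+1)^d$, whereas you use the exact coordinate-wise overlap criterion $|i_\ell-j_\ell|\leq s/R$ and get the sharper bound $(2\lfloor s/R\rfloor+1)^d\leq(2s/R+1)^d$, with no factor $d$ inside the parenthesis. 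Two small points of bookkeeping. First, your closing majoration $2s/R+1\leq 2sd/R$ fails for $d=1$ (it reads $2s/R+1\leq 2s/R$), so you cannot literally reach the constant $(2sd/R)^d$ printed in the lemma that way; but neither does the paper --- its own proof ends at $(2sd/R+1)^d$, and that is the form actually invoked later (the bound on $|J_x|$ in the proof of Theorem~\ref{CLT}), so the lemma's displayed constant is best read as a typo for $(2sd/R+1)^d$, which both your bound and the paper's imply. Second, your remark that no $j\neq i$ assumption is needed is right and harmless; the paper excludes $j=i$ only because it speaks of ``other'' cubes.
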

\begin{proof}
Since each point of a $d$-dimensional square with side length $s$ is at distance at most $s\sqrt{d}/2$ from its centre, we get $\dist(C_i,C_j)\geq\sqrt{(Ri_1-Rj_1)^2+\cdots+(Ri_d-Rj_d)^2}-s\sqrt{d}$ which takes its minimum when $|i_l-j_l|=r/d$ for all $1\leq l\leq d$ hence $\dist(C_i,C_j)\geq rR/\sqrt{d}-s\sqrt{d}.$\\
In particular, if $\lun{i-j}>sd/R$ then $C_i\cap C_j=\emptyset$, hence for all $i\in\Z^d$
$$|\{j\in\Z^d: C_i\cap C_j\neq\emptyset, i\neq j\}|\leq |\{j:0<\lun{i-j}\leq sd/R\}|\leq\left (\frac{2sd}{R}+1\right)^d$$
\end{proof}

\begin{lem} \label{ineqmatrix}
Let $M$ and $N$ be two $n\times n$ semi-positive definite matrices such that $0\leq M\leq N^{-1}$ where $\leq$ denotes the Loewner order. Then,
$$\det(Id-MN)\geq 1-\tr(MN)$$
\end{lem}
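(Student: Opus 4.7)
The approach is to diagonalize the problem and reduce it to an elementary inequality on the eigenvalues. Since $N^{-1}$ appears in the hypothesis, $N$ is in fact positive definite, and so $N^{1/2}$ makes sense and is invertible. The condition $M\leq N^{-1}$ is equivalent, after conjugating by $N^{1/2}$, to $N^{1/2}MN^{1/2}\leq \mathrm{Id}$. Moreover this matrix is positive semi-definite since $M\geq 0$, so its eigenvalues lie in $[0,1]$. The key observation is that $MN$ is similar to $N^{1/2}MN^{1/2}$ (conjugation by $N^{1/2}$), hence $MN$ has the same eigenvalues $\lambda_1,\dots,\lambda_n\in[0,1]$.

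Once this is established, both sides of the inequality are expressed in terms of the $\lambda_i$: we have $\det(\mathrm{Id}-MN)=\prod_{i=1}^n(1-\lambda_i)$ and $\mathrm{Tr}(MN)=\sum_{i=1}^n \lambda_i$. It therefore suffices to prove the elementary inequality
\[
\prod_{i=1}^n(1-\lambda_i)\geq 1-\sum_{i=1}^n\lambda_i,\qquad \lambda_i\in[0,1].
\]

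This is proved by induction on $n$. The case $n=1$ is trivial. For the inductive step, I would split on the sign of $1-\sum_{i=1}^n\lambda_i$: if $\sum_{i=1}^n\lambda_i\geq 1$ the right-hand side is non-positive while $\prod(1-\lambda_i)\geq 0$, so the inequality is immediate. Otherwise $\sum_{i=1}^{n-1}\lambda_i<1$, the induction hypothesis gives $\prod_{i=1}^{n-1}(1-\lambda_i)\geq 1-\sum_{i=1}^{n-1}\lambda_i\geq 0$, and multiplying by $(1-\lambda_n)\geq 0$ yields
\[
\prod_{i=1}^n(1-\lambda_i)\geq (1-\lambda_n)\Bigl(1-\sum_{i=1}^{n-1}\lambda_i\Bigr)=1-\sum_{i=1}^n\lambda_i+\lambda_n\sum_{i=1}^{n-1}\lambda_i\geq 1-\sum_{i=1}^n\lambda_i.
\]

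The only mildly delicate step is the reduction to eigenvalues: one needs to be careful that $MN$ is not symmetric in general, so one cannot directly apply the spectral theorem to it. The trick is the similarity $N^{-1/2}(MN)N^{1/2}=N^{1/2}MN^{1/2}$ which transports the eigenvalues to a symmetric matrix, whose spectrum lies in $[0,1]$ by the Loewner-order hypothesis. After that, everything reduces to the scalar inequality above, which is completely routine.
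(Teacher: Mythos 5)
Your proof is correct, and its core reduction is the same as the paper's: both arguments replace $MN$ by a positive semi-definite matrix with spectrum in $[0,1]$ (the paper writes $N=S^TS$ and uses Sylvester's identity to pass to $SMS^T$; you take $S=N^{1/2}$ and use similarity), after which everything rests on the scalar inequality $\prod_i(1-\lambda_i)\geq 1-\sum_i\lambda_i$ for $\lambda_i\in[0,1]$. Where you differ is in the proof of that last inequality: the paper splits on whether $\tr(M)\geq 1$ and, in the remaining case, writes $\det(\mathrm{Id}-M)=\exp\bigl(-\sum_{n\geq 1}\tr(M^n)/n\bigr)$ and bounds $\tr(M^n)\leq\tr(M)^n$ to resum the logarithm, whereas you prove the Weierstrass product inequality directly by induction. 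Your route is more elementary and avoids the convergence bookkeeping of the log series, at no loss of generality; the paper's route generalizes more naturally to trace-class operators, which is in keeping with its DPP context but is not needed for the lemma as stated. One small slip: the conjugation identity should read $N^{1/2}(MN)N^{-1/2}=N^{1/2}MN^{1/2}$ (you wrote the inverse on the wrong side, which gives $N^{-1/2}MN^{3/2}$); the similarity claim you rely on is nonetheless correct.
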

\begin{proof}
First, let us consider the case where $N=Id$.
If $\tr(M)\geq 1$ then $\det(Id-M)\geq 0\geq 1-\tr(M)$. Otherwise, we denote by $\Sp(M)$ the spectrum of $M$ and since all eigenvalues are in [0,1[, we can write
\begin{align}
\det(Id-M)&=\prod_{\lambda\in\Sp(M)}\exp(\ln(1-\lambda))\nonumber\\
&=\prod_{\lambda\in\Sp(M)}\exp{\left ( -\sum_{n=0}^{\infty}\frac{\lambda^n}{n} \right )}\nonumber\\
&=\exp{\left ( -\sum_{n=0}^{\infty}\frac{\tr(M^n)}{n} \right )}\nonumber\\
&\geq\exp{\left ( -\sum_{n=0}^{\infty}\frac{\tr(M)^n}{n} \right )}\nonumber\\
&=\exp(\log(1-\tr(M)))\nonumber\\
&=1-\tr(M).\label{resultmatrix}
\end{align}
Getting back to the general case, we can write $N$ as $S^TS$ and by Sylvester's determinant identity we get that $\det(Id-MN)=\det(Id-SMS^T)$. Since we assumed that $0\leq M\leq N^{-1}$ then $0\leq SMS^T\leq Id$ and by applying~(\ref{resultmatrix}) this concludes the proof:
$$\det(Id-MN)=\det(Id-SMS^T)\geq 1-\tr(SMS^T)=1-\tr(MN).$$
\end{proof}

\begin{prop} \label{lemdet}
Let $M$ be a $n \times n$ semi-definite positive matrix of the form $M=\begin{pmatrix} M_1 & N \\ N^T & M_2 \end{pmatrix}$ where $M_1$ is a $k \times k$ semi-definite positive matrix, $M_2$ is a $(n-k) \times (n-k)$ semi-definite positive matrix and $N$ is a $k\times (n-k)$ matrix. We define $||A||_{\infty}\defeq\sup |a_{i,j}|$ for any matrix $A$. Then,
$$0\leq \det(M_1)\det(M_2)-\det(M) \leq k(n-k)\tr(N^TN)||M||_{\infty}^{n-2}.$$
\end{prop}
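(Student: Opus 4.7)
The plan is to combine the Schur complement with Lemma~\ref{ineqmatrix} to obtain a clean bound in terms of $\tr(\adj(M_2)\, N^T \adj(M_1)\, N)$, and then control that trace by bounding the entries of $\adj(M_1)$ and $\adj(M_2)$ using Hadamard's inequality for PSD matrices. First I would reduce to the case where $M_1$ and $M_2$ are strictly positive definite by replacing $M$ with $M + \epsilon I$ and letting $\epsilon \downarrow 0$; both sides of the claimed inequality depend continuously on the entries of $M$, so this is harmless. Under this assumption, Schur's formula gives $\det(M) = \det(M_1)\det(S)$ with $S \defeq M_2 - N^T M_1^{-1} N$, and the positivity of $M$ forces $0 \preceq S \preceq M_2$. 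The lower bound $\det(M) \leq \det(M_1)\det(M_2)$ (Fischer's inequality) then follows from the monotonicity of $\det$ on the cone of PSD matrices.

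For the upper bound I would invoke Lemma~\ref{ineqmatrix} with the lemma's matrices chosen as $N^T M_1^{-1} N$ and $M_2^{-1}$ respectively; the hypothesis $N^T M_1^{-1} N \preceq M_2$ is exactly the positivity of $S$. The lemma yields $\det(I - N^T M_1^{-1} N \cdot M_2^{-1}) \geq 1 - \tr(N^T M_1^{-1} N \cdot M_2^{-1})$, which after multiplying by $\det(M_2)$ (and using $\det(M_2) M_2^{-1} = \adj(M_2)$) becomes $\det(S) \geq \det(M_2) - \tr(N^T M_1^{-1} N \cdot \adj(M_2))$. Multiplying by $\det(M_1)$ and using $\det(M_1) M_1^{-1} = \adj(M_1)$ gives the intermediate inequality
$$\det(M_1)\det(M_2) - \det(M) \leq \tr\bigl(\adj(M_2)\, N^T \adj(M_1)\, N\bigr).$$

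It then remains to bound this trace entrywise. Expanding, the trace equals $\sum_{i,j,l,m} \adj(M_2)_{ij}\, N_{lj}\, \adj(M_1)_{lm}\, N_{mi}$ with $i,j \in \{1,\dots,n-k\}$ and $l,m \in \{1,\dots,k\}$. The crucial technical point is to control the $\adj$ entries, which are $(k-1)\times(k-1)$ and $(n-k-1)\times(n-k-1)$ minors of the PSD matrices $M_1$ and $M_2$. For a PSD matrix $A$ and equal-size index sets $I, J$, the Cauchy--Binet identity applied to a Cholesky factor $A = LL^T$ combined with Cauchy--Schwarz gives $|\det(A[I,J])| \leq \sqrt{\det(A[I,I])\det(A[J,J])}$, and Hadamard's inequality applied to the PSD principal minors then yields $|\det(A[I,J])| \leq \|A\|_\infty^{|I|}$. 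This provides $|\adj(M_1)_{lm}| \leq \|M\|_\infty^{k-1}$ and $|\adj(M_2)_{ij}| \leq \|M\|_\infty^{n-k-1}$, so the trace above is at most $\|M\|_\infty^{n-2}\,\|N\|_1^2$ where $\|N\|_1 \defeq \sum_{i,j}|N_{ij}|$. A final Cauchy--Schwarz over the $k(n-k)$ entries of $N$ gives $\|N\|_1^2 \leq k(n-k)\tr(N^T N)$, concluding the bound. The main obstacle is the PSD-minor inequality, since non-principal minors of a PSD matrix are not directly controlled by its diagonal entries; passing through Cauchy--Binet and Cauchy--Schwarz is what reduces them to principal minors to which Hadamard applies.
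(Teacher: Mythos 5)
Your proof is correct, and up to the intermediate inequality $\det(M_1)\det(M_2)-\det(M)\leq\tr\bigl(\adj(M_2)N^T\adj(M_1)N\bigr)$ it coincides with the paper's argument: the same reduction to invertible blocks by perturbation, the same Schur-complement factorization with $0\preceq S\preceq M_2$ giving the lower bound, and the same invocation of Lemma~\ref{ineqmatrix}. The two proofs part ways only in how that trace is then estimated. The paper uses the operator inequality $\tr(AB^TCB)\leq\tr(A)\tr(C)\tr(B^TB)$ for positive semi-definite $A,C$, which reduces everything to $\tr(\adj(M_1))$ and $\tr(\adj(M_2))$, i.e.\ to sums of \emph{principal} minors, each controlled by $\|M\|_{\infty}^{k-1}$ (resp.\ $\|M\|_{\infty}^{n-k-1}$) by plain Hadamard. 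You instead bound \emph{every} entry of the adjugates, which requires controlling non-principal minors of a PSD matrix; your route via Cauchy--Binet on a Cholesky factor plus Cauchy--Schwarz, yielding $|\det(A[I,J])|\leq\sqrt{\det(A[I,I])\det(A[J,J])}$ and hence $|\det(A[I,J])|\leq\|A\|_{\infty}^{|I|}$, is a correct (if less standard) generalized Hadamard inequality, and your final $\ell^1$--$\ell^2$ step $\|N\|_1^2\leq k(n-k)\tr(N^TN)$ lands on exactly the paper's constant. In short, you trade the paper's one-line trace inequality for a heavier minor estimate plus an entrywise expansion; both finishes are valid and equally sharp, the paper's being somewhat more economical.
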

\begin{proof}
First, we assume that $M_1$ and $M_2$ are invertible. Using Schur's complement, we can write
$$\det(M)=\det(M_1)\det(M_2)\det(Id-M_1^{-1}N^TM_2^{-1}N)$$
where $0\leq N^TM_2^{-1}N \leq M_1$ with $\leq$ being the Loewner order. $N^TM_2^{-1}N$ being semi-definite positive implies
$$\det(M)\leq\det(M_1)\det(M_2),$$
while the inequality $N^TM_2^{-1}N \leq M_1$ gives us (see Lemma~\ref{ineqmatrix})
$$\det(M)\geq\det(M_1)\det(M_2)(1-\tr(M_1^{-1}N^TM_2^{-1}N)).$$
Therefore,
\begin{multline*}
0\leq \det(M_1)\det(M_2)-\det(M) \leq \tr(\adj(M_1)N^T\adj(M_2)N)\\
\leq\tr(\adj(M_1))\tr(\adj(M_2))\tr(N^TN)=
\sum_{i=1}^k\Delta_i(M_1)\sum_{j=1}^{n-k}\Delta_j(M_2)\tr(N^TN),
\end{multline*}
where $\Delta_i(M_1)$ means the $(i,i)$ minor of the matrix $M_1$ and $\adj(M_1)$ is the transpose of the matrix of cofactor of $M_1$. But, since all principal sub-matrices of $M_1$ and $M_2$ are positive definite matrices then their determinant is lower than the product of their diagonal entries, meaning that $\Delta_i(M_1)\leq \prod_{j\neq i}M_1(j,j)\leq||M||_{\infty}^{k-1}$. Doing the same thing for the terms $\Delta_j(M_2)$ gives us the desired result.

If  $M_1$ or $M_2$ is not invertible, a limit argument using the continuity of the determinant leads to the same conclusion.

\end{proof}

\begin{lem} \label{bornenul}
Let $X$ be a DPP with bounded kernel $K$ satisfying $\mathcal H$, $s>0$ and $n>0$, then
$$\sup_{A\subset \R^d,\vol{A}=s}\E[2^{nN(A)}]<\infty$$
\end{lem}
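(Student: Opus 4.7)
The plan is to exploit the classical description of the distribution of $N(A)$ for a DPP: when $A$ is bounded, $N(A)$ has the same law as a sum $\sum_i B_i$ of independent Bernoulli variables with parameters $\lambda_i(A) \in [0,1]$, where the $\lambda_i(A)$ are the eigenvalues of the restricted integral operator $\mathcal{K}_A$ (this follows from Condition $\mathcal{H}$ and is standard; see \cite{Hough,Shirai}). This reduces the problem to an explicit computation of an exponential moment of independent Bernoullis.

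First I would write, using independence,
$$\E[2^{nN(A)}] = \prod_i \E[2^{nB_i}] = \prod_i \bigl(1 + (2^n-1)\lambda_i(A)\bigr).$$
Next I would apply the elementary inequality $1+x \leq e^{x}$ for $x \geq 0$ to obtain
$$\E[2^{nN(A)}] \leq \exp\!\left((2^n-1)\sum_i \lambda_i(A)\right) = \exp\!\left((2^n-1)\,\tr(\mathcal{K}_A)\right).$$

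Finally, I would control the trace by the boundedness of $K$: since $\tr(\mathcal{K}_A) = \int_A K(x,x)\,\der x \leq \|K\|_{\infty}\,|A| = s\,\|K\|_{\infty}$, we conclude
$$\sup_{A\subset\R^d,\,|A|=s} \E[2^{nN(A)}] \leq \exp\!\bigl((2^n-1)\,s\,\|K\|_{\infty}\bigr) < \infty,$$
which is uniform in $A$ as required.

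The only delicate point is justifying the Bernoulli representation when $A$ has finite volume but $\mathcal{K}_A$ is not necessarily trace class a priori -- however, $\mathcal{K}$ being locally of trace class (by Condition $\mathcal{H}$) and $K$ being bounded ensure both that $\mathcal{K}_A$ is trace class and that its trace equals $\int_A K(x,x)\,\der x$, so the representation and the subsequent estimates are valid. Everything else is a routine algebraic manipulation.
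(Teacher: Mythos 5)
Your proof is correct, but it takes a genuinely different route from the paper's. The paper avoids the spectral/Bernoulli description of $N(A)$ entirely: it writes $2^{nN(A)}=(1+(2^n-1))^{N(A)}=\sum_{k\geq 0}\binom{N(A)}{k}(2^n-1)^k$, uses the defining identity for the intensity functions to get $\E\bigl[\binom{N(A)}{k}\bigr]=\frac{1}{k!}\int_{A^k}\det(K[x])\,\der x$, and then bounds $\det(K[x])\leq\prod_i K(x_i,x_i)\leq\|K\|_{\infty}^k$ by Hadamard's inequality for positive semi-definite matrices, summing the series to the same bound $e^{(2^n-1)\|K\|_{\infty}|A|}$ that you obtain. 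Your argument instead invokes the representation of $N(A)$ as a sum of independent Bernoulli variables with parameters the eigenvalues of $\mathcal{K}_A$, computes the exponential moment exactly, and bounds the trace; you correctly flag and resolve the one delicate point, namely that $\mathcal{K}_A$ is trace class for an arbitrary Borel $A$ of finite volume (which follows from positivity, local trace class and $\tr(\mathcal{K}_A)=\int_A K(x,x)\,\der x\leq\|K\|_{\infty}|A|$). The trade-off: your route leans on a deeper structural theorem about DPPs (the Bernoulli decomposition requires the Hermitian kernel and eigenvalues in $[0,1]$), whereas the paper's computation only uses the factorial moment formula together with the bound $\rho_k\leq\|K\|_{\infty}^k$, and therefore transfers verbatim to any point process whose correlation functions are dominated by $C^k$ --- for instance the $\alpha$-DPPs mentioned after Theorem~\ref{CLTDPP} --- where no Bernoulli representation is available. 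Both proofs are valid and yield the identical uniform bound $\exp\bigl((2^n-1)s\|K\|_{\infty}\bigr)$.
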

\begin{proof}
Let $n\in\N$ and $A\subset \R^d$ such that $\vol{A}=s$. Since the determinant of a positive semi-definite matrix is always smaller than the product of its diagonal coefficients we get
\begin{align*}
\E[2^{nN(A)}]&=\E\left [\sum_{k=0}^{\infty}\binom{N(A)}{k}(2^n-1)^k \right ]\\
&=\sum_{k=0}^{\infty}\frac{(2^n-1)^k}{k!}\int_{A^k}\det(K[x])\der x\\
&\leq e^{(2^n-1)\|K\|_{\infty}|A|}<\infty.
\end{align*}
\end{proof}

\begin{lem} \label{ugly}
Let $X$ be a DPP on $\R^d$ with bounded kernel $K$ satisfying $\mathcal H$ such that $\omega(r)=O(r^{-\frac{d+\epsilon}{2}})$ for a certain $\epsilon>0$. Then, for all bounded Borel sets $W\subset R^d$ and all bounded functions $g:\bigcup_{p>0}(\R^d)^p\rightarrow\R$ such that $g(S)$ vanishes when $diam(S)>\tau$ for a given constant $\tau>0$,
\begin{equation} \label{tobound}
\var\left(\sum_{S\subset X\cap W} g(S)\right)=O(\vol{W}).
\end{equation}
\end{lem}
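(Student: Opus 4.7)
The strategy is to decompose $T := \sum_{S \subset X \cap W} g(S)$ by the cardinality of $S$, writing $T = \sum_{p \geq 1} T_p$ with
\[
T_p = \frac{1}{p!}\sum^{\neq}_{x_1, \ldots, x_p \in X \cap W} g^{sym}(x_1, \ldots, x_p),
\]
where $g^{sym}$ denotes the symmetrization of $g$ on $p$-point tuples. The crude pointwise bound $|T| \leq \|g\|_\infty \sum_{x \in X\cap W} 2^{N(\mathcal{B}(x,\tau))}$, combined with Lemma~\ref{bornenul}, implies $T \in L^2$ and legitimates the expansion $\var(T) = \sum_{p,q \geq 1} \cov(T_p, T_q)$. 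It will then suffice to bound each $|\cov(T_p, T_q)|$ by $C_{p,q}|W|$ with $\sum_{p,q \geq 1} C_{p,q} < \infty$.

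For fixed $(p, q)$, I expand $T_p T_q$ using the Campbell formula for higher-order intensities, classifying the resulting double sum by the number $k \in \{0, \ldots, \min(p, q)\}$ of coordinates shared between the two tuples. After subtracting $\E[T_p]\E[T_q]$, this yields
\[
\cov(T_p, T_q) = \frac{1}{p!q!}\int_{W^{p+q}}g^{sym}(u)g^{sym}(v)\left[\rho_{p+q}(u,v)-\rho_p(u)\rho_q(v)\right]du\,dv + \sum_{k=1}^{\min(p,q)} I_k(p,q),
\]
where each $I_k(p,q)$ is an integral over $W^{p+q-k}$ of $g^{sym}\,g^{sym}\,\rho_{p+q-k}$ with $k$ coordinates identified between the two factors. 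When $k \geq 1$, the diameter support condition on $g$ forces every variable to lie within distance $\tau$ of the shared coordinate $u_1$, so the integration domain has volume at most $|W|\cdot|\mathcal{B}(0,\tau)|^{p+q-k-1}$; combined with the DPP bound $\rho_n \leq \|K\|_\infty^n$, this delivers $|I_k(p,q)|\leq \|g\|_\infty^2\|K\|_\infty^{p+q-k}|\mathcal{B}(0,\tau)|^{p+q-k-1}|W|/(k!(p-k)!(q-k)!)$, which is readily summable.

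The $k=0$ contribution is the main obstacle, since without a shared coordinate the two tuples can sit arbitrarily far apart and the $|W|$-scaling no longer comes for free from the diameter constraint. Here the DPP structure is essential: viewing $K[u, v]$ as a block matrix with diagonal blocks $K[u]$, $K[v]$ and off-diagonal block $N = (K(u_i, v_j))_{i,j}$, Proposition~\ref{lemdet} gives
\[
|\det K[u,v] - \det K[u]\det K[v]| \leq pq\,\|K\|_\infty^{p+q-2}\sum_{i,j}|K(u_i,v_j)|^2.
\]
For each index pair $(i,j)$, the diameter constraints on $g^{sym}$ let me confine the other $p+q-2$ variables to balls of radius $\tau$ around $u_i$ or $v_j$, and the residual two-point integral is controlled by
\[
\int_{W^2}|K(u,v)|^2\,du\,dv \leq |W|\cdot\sup_{u\in\R^d}\int_{\R^d}|K(u,v)|^2\,dv,
\]
which is finite because $r^{d-1}\omega(r)^2$ is integrable at infinity under the hypothesis $\omega(r) = O(r^{-(d+\epsilon)/2})$ and $K$ is locally bounded. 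Collecting all the pieces and summing over $(p,q,k,i,j)$ with the factorial weights $\frac{1}{p!q!}$ produces a convergent double series of the type $\left(\sum_n a^n/n!\right)^2$ times a bounded factor, yielding $\var(T) \leq C|W|$ for a constant $C$ depending on $\|g\|_\infty$, $\tau$, $\|K\|_\infty$ and the decay of $\omega$.
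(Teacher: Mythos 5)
Your proof is correct and follows essentially the same route as the paper's: decomposition by the cardinality of $S$, classification of pairs of subsets by the number $k$ of shared points, the determinant perturbation bound of Proposition~\ref{lemdet} for the $k=0$ term, and the integrability of $r^{d-1}\omega(r)^2$ to control $\int_W |K(u,v)|^2\,\der v$. The only cosmetic difference is that you expand the off-diagonal covariances $\cov(T_p,T_q)$ directly with the same $k$-classification, whereas the paper bounds each $\var(T_p)$ and then controls the cross terms by the Cauchy--Schwarz inequality.
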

\begin{proof} Since $W$ is bounded then $N(W)$ is almost surely finite and we can write
$$\sum_{S\subset X\cap W} g(S)=\sum_{p\geq 0}\sum_{\substack{S\subset X\cap W \\ |S|=p}}g(S)\quad\mbox{a.s.}$$
Looking at the variance of each term individually, we start by developing $\displaystyle\E\left[\left(\sum_{S\subset X\cap W} g(S)\cara{|S|=p}\right)^2\right]$ as
\begin{align}
&\sum_{k=0}^p\E\sizeL[\sum_{\substack{S,T\subset X\cap W \\ |S|=|T|=p,|S\cap T|=k}} g(S)g(T)\sizeR]\\
=&\sum_{k=0}^p\E\sizeL[\sum_{\substack{U\subset X\cap W \\ |U|=2p-k}}\sum_{\substack{S'\subset S\subset U \\ |S'|=k,|S|=p}} g(S)g(S'\cup (U\backslash S))\sizeR]\nonumber\\
=&\sum_{k=0}^p\frac{1}{(2p-k)!}\int_{W^{2p-k}}\sum_{\substack{S'\subset S\subset\{x_1,\cdots,x_{2p-k}\} \\ |S'|=k,|S|=p}} g(S)g(S'\cup (U\backslash S))\rho_{2p-k}(x_1,\cdots,x_{2p-k})\der x_1\cdots\der x_{2p-k}\nonumber\\
=&\sum_{k=0}^p\frac{1}{(2p-k)!}\binom{p}{k}\binom{2p-k}{p}\int_{W^{2p-k}}g(x_1,\cdots,x_p)g(x_1,\cdots,x_k,x_{p+1},\cdots,x_{2p-k})\rho_{2p-k}(x)\der x.\label{midboss}
\end{align}
Since the determinant of a positive semi-definite matrix is smaller than the product of its diagonal terms, we have $|\rho_{2p-k}(x)|\leq\|K\|_{\infty}^{2p-k}$. Moreover, as a consequence of our assumptions on $g$, each term for $k\geq 1$ in~(\ref{midboss}) is bounded by
\begin{align*}
\frac{1}{p!(p-k)!}\binom{p}{k}\int\limits_{W^{2p-k}}\|g\|_{\infty}^2\|K\|_{\infty}^{2p-k}\cara{\{0\leq |x_i-x_1|\leq \tau,~\forall i\}}\der x &\leq\frac{|W|}{p!}\binom{p}{k}\|g\|_{\infty}^2\|K\|_{\infty}^{2p-k}|\mathcal{B}(0,\tau)|^{2p-k-1}\\
&\leq\frac{|W|}{p!}\binom{p}{k}\|g\|_{\infty}^2(1+\|K\|_{\infty})^{2p}(1+|\mathcal{B}(0,\tau)|)^{2p}.
\end{align*}
Hence, 
\begin{equation} \label{bornmoche}
\sum_{k=1}^p\E\sizeL[\sum_{\substack{S,T\subset X\cap W \\ |S|=|T|=p,|S\cap T|=k}} g(S)g(T)\sizeR]\leq |W|\|g\|_{\infty}^2\frac{C_1^p}{p!}
\end{equation}
where $C_1=2(1+\|K\|_{\infty})^2(1+\mathcal{B}(0,\tau))^2$ is a constant independent from $p$ and $W$.
However, even if all terms for $k\geq 1$ in~(\ref{midboss}) are $O(|W|)$, this is not the case of the term for $k=0$ which is a $O(|W|^2)$.
Instead of controlling this term alone, we consider its difference with the remaining term in the variance we are looking at, that is
\begin{multline*}\frac{1}{(p!)^2}\int_{W^{2p}}g(x)g(y)\rho_{2p}(x,y)\der x\der y -  \displaystyle\E\left[\left(\sum_{S\subset X\cap W} g(S)\cara{|S|=p}\right)\right]^2 \\= \frac{1}{(p!)^2}\int_{W^{2p}}g(x)g(y)(\rho_{2p}(x,y)-\rho_{p}(x)\rho_{p}(y))\der x\der y .\end{multline*}
Using Proposition~\ref{lemdet}, we get
$$|\rho_{2p}(x,y)-\rho_{p}(x)\rho_{p}(y)|\leq p^2\|K\|_{\infty}^{2p-2}\sum_{1\leq i,j\leq p}K(x_i,y_j)^2.$$
Now, notice that for all $y\in\R^d$ and $1\leq i\leq p$,
$$\int_{W^p}\cara{\{0<|x_k-x_j|\leq \tau,~\forall j,k\}}|K(x_i,y)|^2\der x\leq|\mathcal{B}(0,\tau)|^{p-1}\int_W |K(x_i,y)|^2\der x_i\leq |\mathcal{B}(0,\tau)|^{p-1}s_d\int_{\R^d}r^{d-1}\omega(r)^2\der r$$
which is finite because of our assumption on $\omega(r)$. Thus, we obtain the inequality
\begin{align}
\int_{W^{2p}}g(x)g(y)|K(x_i,y_j)|^2\der x\der y&\leq\|g\|_{\infty}|\mathcal{B}(0,\tau)|^{p-1}\int_{W^{p+1}}g(x)|K(x_i,y_1)|^2\der x\der y_1\nonumber\\
&\leq|W|\|g\|_{\infty}^2 |\mathcal{B}(0,\tau)|^{2p-2}s_d\int_{\R^d}r^{d-1}\omega(r)^2\der r. \label{intmoche}
\end{align}
By combining~(\ref{bornmoche}) and~(\ref{intmoche}), we get the bound
$$\var\sizeL(\sum_{\substack{S\subset X\cap W \\ |S|=p}} g(S)\sizeR)\leq |W|\|g\|_{\infty}^2\left (\frac{C_1^p}{p!}+\frac{C_2}{p!}\right)$$
where
$$C_2\defeq \left (\sup_{p\geq 0}\frac{p^4\|K\|_{\infty}^{2p-2}|\mathcal{B}(0,\tau)|^{2p-2}}{p!}\right ) s_d\int_{\R^d}r^{d-1}\omega(r)^2\der r$$
is a constant independent from $p$ and $W$. Finally,
$$\sum_{p\geq 0}\var\sizeL(\sum_{\substack{S\subset X\cap W \\ |S|=p}} g(S)\sizeR)=O(|W|)$$
and
$$\sum_{p>q\geq 0}\cov\sizeL(\sum_{\substack{S\subset X\cap W \\ |S|=p}} g(S),\sum_{\substack{S\subset X\cap W \\ |S|=q}} g(S)\sizeR)\leq|W|\|g\|_{\infty}^2\sum_{p,q\geq 0}\sqrt{\left(\frac{C_1^p}{p!}+\frac{C_2}{p!}\right)\left(\frac{C_1^q}{q!}+\frac{C_2}{q!}\right)}=O(|W|)$$
concluding the proof.
\end{proof}

\begin{prop} \label{lowerbound}
Let $p\in\N$, $f:\R^p\rightarrow\R_+$ be a symmetrical measurable function 
and define
$$F(X)=\sum_{\substack{S\subset X \\ |S|=p}}f(S).$$
Let $X$ be a DPP with kernel $K$ satisfying Condition~$\mathcal{H}$ such that $\|\mathcal{K}\|<1$ where $\|\mathcal{K}\|$ is the operator norm of the integral operator associated with $K$. 
If, for a given increasing sequence of compact sets $W_n\subset\R^d$,
\begin{equation} \label{condhyp}
\liminf_n \frac{1}{|W_n|}\int_{W_n^p}f(x)\det(K[x])\der x>0,
\end{equation}
then  $$\liminf_n \frac{1}{|W_n|}\var (F(X\cap W_n))>0.$$
\end{prop}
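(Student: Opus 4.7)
The plan is to expand $\var(F(X\cap W_n))$ via a factorial moment decomposition, dominate the single negative term using the spectral bound $\|\mathcal{K}\|<1$, and close with a Cauchy--Schwarz estimate tying the diagonal contribution back to the hypothesis~\eqref{condhyp}.

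First I would classify pairs $(S,T)$ of $p$-subsets of $X\cap W_n$ by their intersection size $k=|S\cap T|\in\{0,\dots,p\}$ and proceed exactly as in the computation leading to~\eqref{midboss} in the proof of Lemma~\ref{ugly}. This gives
\[
\var\bigl(F(X\cap W_n)\bigr)\;=\;\sum_{k=1}^{p}T_k(n)\;+\;T_0(n),
\]
where, for $k\geq 1$,
\[
T_k(n)=\frac{1}{k!(p-k)!^2}\int_{W_n^{2p-k}} f(x_1,\dots,x_p)\,f(x_1,\dots,x_k,x_{p+1},\dots,x_{2p-k})\,\rho_{2p-k}(x)\,\der x\;\geq\;0,
\]
and
\[
T_0(n)=\frac{1}{(p!)^2}\int_{W_n^{2p}} f(x)f(y)\bigl[\rho_{2p}(x,y)-\rho_p(x)\rho_p(y)\bigr]\,\der x\der y\;\leq\;0
\]
is non-positive since DPPs are negatively associated.

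The next step would control $|T_0(n)|$. By the Schur complement bound of Proposition~\ref{lemdet},
\[
0\;\leq\;\rho_p(x)\rho_p(y)-\rho_{2p}(x,y)\;\leq\;p^2\|K\|_\infty^{2p-2}\sum_{i,j=1}^{p}|K(x_i,y_j)|^2,
\]
and the symmetry of $f$ reduces the resulting bound on $|T_0(n)|$ to a constant times $\int\!\int \tilde f(u)\tilde f(v)|K(u,v)|^2\,\der u\,\der v$ for some nonnegative marginal $\tilde f$ obtained by partially integrating $f$ against $p-1$ of its arguments. Expanding $\mathcal{K}$ in its eigenbasis $(\lambda_i,\phi_i)$ and setting $B_{i,j}=\int \tilde f\,\phi_i\overline{\phi_j}$, Parseval gives $\int\!\int\tilde f\tilde f|K|^2=\sum_{i,j}\lambda_i\lambda_j|B_{i,j}|^2$ and $\int\tilde f^2 K(u,u)\,\der u=\sum_{i,j}\lambda_i|B_{i,j}|^2$; since $\lambda_j\leq\|\mathcal{K}\|$, this yields the key spectral domination $\int\!\int \tilde f(u)\tilde f(v)|K(u,v)|^2\,\der u\,\der v\leq\|\mathcal{K}\|\int\tilde f^2 K(u,u)\,\der u$. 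Combining this with the preceding paragraph gives $\var(F(X\cap W_n))\geq (1-\|\mathcal{K}\|)\,T_p(n)$ up to harmless constants, where $T_p(n)=\tfrac{1}{p!}\int_{W_n^p}f(x)^2\det K[x]\,\der x$ is the diagonal $k=p$ contribution. To close, Cauchy--Schwarz provides the lower bound
\[
T_p(n)\;\geq\;\frac{1}{p!}\,\frac{\bigl(\int_{W_n^p} f(x)\det K[x]\,\der x\bigr)^2}{\int_{W_n^p}\det K[x]\,\der x},
\]
with numerator at least $(c|W_n|)^2$ by~\eqref{condhyp} and denominator at most $\|K\|_\infty^p|W_n|^p$ via $\det K[x]\leq\|K\|_\infty^p$.

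I expect the main obstacle to be precisely this last Cauchy--Schwarz step, which as stated only yields $T_p(n)\gtrsim|W_n|^{2-p}$ and hence the desired linear lower bound only when $p=1$. For $p\geq 2$ the bound must be strengthened either by exploiting the intermediate positive terms $T_k(n)$ with $1\leq k<p$---in the localized regime relevant to Theorem~\ref{CLTDPP} these are themselves of order $|W_n|$---or by sharpening the spectral domination so as to bound $|T_0(n)|$ by $(1-\eta)\sum_{k\geq 1}T_k(n)$ for some $\eta>0$ depending only on $\|\mathcal{K}\|$, rather than by $\|\mathcal{K}\|\cdot T_p(n)$ alone. Producing this joint bound, and thus matching all diagonal and near-diagonal contributions, is where the bulk of the technical work lies.
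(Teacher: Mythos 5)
Your proposal has a genuine gap, and it is the one you yourself flag at the end: the direct second-moment expansion is quadratic in $f$, while the hypothesis~\eqref{condhyp} is linear in $f$, and no application of Cauchy--Schwarz to the diagonal term $T_p(n)=\tfrac1{p!}\int_{W_n^p}f(x)^2\det(K[x])\,\der x$ can bridge that mismatch without losing a factor $|W_n|^{p-1}$. The repair strategies you sketch do not close it either: the intermediate terms $T_k(n)$, $1\leq k<p$, involve $\rho_{2p-k}$ integrated over $W_n^{2p-k}$ and are of order $|W_n|$ only when $f$ is locally supported, which is not assumed in Proposition~\ref{lowerbound} (it is an assumption of Theorem~\ref{CLTDPP}, not of this proposition); and even a sharpened spectral domination of $|T_0(n)|$ by the positive terms would still leave you with a lower bound in terms of $\int f^2\det K$ rather than $\int f\det K$. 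So the argument, as structured, proves the statement only for $p=1$.

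The paper avoids the problem entirely by linearizing: it bounds $\var(F(X\cap W_n))\geq\cov(F(X\cap W_n),N(W_n))^2/\var(N(W_n))$ and computes the covariance with the point count explicitly. Splitting $\E[F(X\cap W)N(W)]$ according to whether the extra point lies in $S$ gives $\cov(F(X\cap W),N(W))=\tfrac1{p!}\int_{W^p}f(x)\det(K[x])\bigl(p-\int_W K_{ax}K[x]^{-1}K_{ax}^T\,\der a\bigr)\der x$ via the Schur complement identity $K(a,a)-\det(K[x,a])\det(K[x])^{-1}=K_{ax}K[x]^{-1}K_{ax}^T$, and the spectral expansion of $\mathcal K$ on $W$ shows $\int_W K_{ax}K[x]^{-1}K_{ax}^T\,\der a=\tr(K[x]^{-1}L[x])\leq p\|\mathcal K\|$ where $L(x,y)=\int_W K(x,a)K(a,y)\,\der a$. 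Since $f\geq 0$ this yields $\cov(F,N)\geq(1-\|\mathcal K\|)\tfrac1{(p-1)!}\int_{W^p}f(x)\det(K[x])\,\der x$, which is exactly the quantity controlled by~\eqref{condhyp}; combined with $\var(N(W_n))=O(|W_n|)$ this gives the linear lower bound for every $p$. Note that your spectral-domination inequality is the same mechanism ($\mathcal K^2\leq\|\mathcal K\|\,\mathcal K$), but it must be applied to the covariance with $N(W)$, not to the raw variance expansion, for the hypothesis to become usable.
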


\begin{proof}
Let $W$ be a compact subset of $\R^d$. The Cauchy-Schwartz inequality gives us
$$\cov(F(X\cap W),N(W))^2\leq\var(F(X\cap W))\var(N(W)).$$
We showed in Lemma~\ref{ugly} that $|W|^{-1}\var(N(W))$ is bounded by a constant $C>0$ so we are only interested in the behaviour of $\cov(F(X\cap W),N(W))$. We start by developing $\E[F(X\cap W)N(W)]$:
\begin{align*}
\E[F(X\cap W)N(W)]&=\E\sizeL [ \sum_{\substack{S\subset X\cap W \\ |S|=p}}f(S)\sum_{x\in X\cap W} 1  \sizeR ]\\
&=\E\sizeL [ \sum_{\substack{S\subset X\cap W \\ |S|=p+1}}\sum_{x\in S}f(S\backslash\{x\}) +p\sum_{\substack{S\subset X\cap W \\ |S|=p}}f(S)  \sizeR ]\\
&=\frac{1}{(p+1)!}\int_{W^{p+1}}\sum_{i=1}^{p+1}f(z\backslash\{z_i\})\det(K[z])\der z+\frac{1}{p!}\int_{W^p}pf(x)\det(K[x])\der x\\
&=\frac{1}{p!}\left(\int_{W^{p}}f(x)\left(p\det(K[x])+\int_W\det(K[x,a])\der a\right)\der x\right).
\end{align*}
We also have
$$\E[F(X\cap W)]\E[N(W)]=\frac{1}{p!}\int_{W^p}f(x)\det(K[x])\der x\int_W K(a,a)\der a,$$
hence
\begin{equation} \label{lowerboundEq1}
\cov(F(X\cap W),N(W))=\frac{1}{p!}\int_{W^p}f(x)\det(K[x])\left( p-\int_W \big (K(a,a)-\det(K[x,a])\det(K[x])^{-1}\big ) \der a \right)\der x.
\end{equation}
Using Schur's complement, we get
\begin{equation} \label{lowerboundEq2}
K(a,a)-\det(K[x,a])\det(K[x])^{-1}=K_{ax}K[x]^{-1}K_{ax}^T
\end{equation}
where we define $K_{ax}$ as the vector $(K(a,x_1),\cdots,K(a,x_p))$. Moreover, since we look at our point process in a compact window $W$, a well-known property of DPPs (see~\cite{Hough}) is that there exists a sequence of eigenvalues $\lambda_i$ in $[0,\|\mathcal{K}\|]$ and an orthonormal basis of $L^2(W)$ of eigenfunctions $\phi_i$ such that
$$K(x,y)=\sum_i \lambda_i\phi_i(x)\bar{\phi}_i(y)~~\forall x,y\in W.$$
As a consequence, $\forall x,y\in W$,
$$\int_W K(x,a)K(a,y) \der a=\sum_i \lambda_i^2\phi_i(x)\bar{\phi}_i(y)$$
which we define as $L(x,y)$. Therefore, for all $x\in W^p$, $L[x]\leq \|\mathcal{K}\|K[x]$ where $\leq$ is the Loewner order for positive definite symmetric matrices and we get
\begin{equation} \label{lowerboundEq3}
\int_W K_{ax}K[x]^{-1}K_{ax}^T \der a=\tr\left(K[x]^{-1}\int_W K_{ax}^TK_{ax} \der a \right)=\tr(K[x]^{-1}L[x])\leq p\|\mathcal{K}\|.
\end{equation}
Finally, since $f$ is non negative, by combining \eqref{lowerboundEq1}, \eqref{lowerboundEq2} and \eqref{lowerboundEq3} we get the lower bound
$$\var(F(X\cap W))\geq\frac{\cov(F(X\cap W),N(W))^2}{\var(N(W))}\geq\frac{(1-\|\mathcal{K}\|)^2}{C(p-1)!^2|W|}\left(\int_{W^{p}}f(x)\det(K[x])\der x\right)^2$$
which proves the proposition.
\end{proof}

\bibliographystyle{plain}
\bibliography{ref}

\end{document}